\newcommand{\eqref}[1]{(\ref{#1})}
\def\~#1{{\mbox{\sf#1}}}
\def\@G{{\mathcal{G}}}
\def\dis{{\mathcal{D}}}
\newcommand{\thetab}{\boldsymbol{\theta}}
\def\R{{\mathbb R}}
\def\N{{\mathbb N}}
\def\bi{{\mathbf i}}
\def\bi{\begin{itemize}} \def\ei{\end{itemize}}
\def\be{\begin{eqnarray*}}
\def\ee{\end{eqnarray*}}
\def\0{{\mathbf 0}}
\newcommand{\beq}{\begin{equation}}
\newcommand{\eeq}{\end{equation}}
\def\bth{\boldsymbol{\theta}}
\def\XXint#1#2#3{{\setbox0=\hbox{$#1{#2#3}{\int}$ }
\vcenter{\hbox{$#2#3$ }}\kern-.55\wd0}}
\newcommand{\Fc}{{\cal F}}
\newcommand{\Mc}{{\cal M}}
\newcommand{\Nc}{{\cal N}}
\def\zal2{z_{\alpha/2}}
\newcommand{\Gy}{\Gamma_{\rm y}}
\newcommand{\Gobs}{\Gamma_{\rm obs}}
\newcommand{\Gpr}{\Gamma_{\rm pr}}
\newcommand{\Gauss}{\Nc}
\newcommand{\Gprs}{S_{\rm pr}} %
\newcommand{\Gprst}{S_{\rm pr}^\top}
\newcommand{\Gpos}{\Gamma_{\rm pos}}
\newcommand{\Gposh}{\widehat{\Gamma}_{\rm pos}}
\newcommand{\mupos}{\bm\mu_{\rm pos}}
\newcommand{\df}{d_{\Fc}}
\newcommand{\trace}{\mathrm{tr}}
\newtheorem{remark}{Remark}[section]
\newtheorem{theorem}{Theorem}[section]
\newtheorem{lemma}[theorem]{Lemma}
\newtheorem{corollary}[theorem]{Corollary}
\newenvironment{proof}[1][Proof]{\begin{trivlist}
\item[\hskip \labelsep {\bfseries #1}]}{\end{trivlist}}
\begin{document}

\title[Approximate empirical Bayesian method for linear inverse
problems]{An approximate empirical Bayesian method for large-scale
  linear-Gaussian inverse problems}

\author{Qingping Zhou}

\address{Department of Mathematics and Institute of Natural Sciences, Shanghai Jiao Tong University, Shanghai 200240, China}
\ead{zhou2015@sjtu.edu.cn}

\author{Wenqing Liu}

\address{Department of Mathematics and Zhiyuan College, Shanghai Jiao Tong University, Shanghai 200240, China}
\ead{wenqingliu@sjtu.edu.cn}

\author{Jinglai Li}

\address{Institute of Natural Sciences, Department of Mathematics and MOE Key Laboratory of Scientific and Engineering Computing, Shanghai Jiao Tong University, Shanghai 200240, China}
\ead{jinglaili@sjtu.edu.cn}

\author{Youssef M.\ Marzouk}

\address{Department of Aeronautics and Astronautics, Massachusetts Institute of Technology, Cambridge, MA 02139,
USA}
\ead{ymarz@mit.edu}

\vspace{10pt}
\begin{indented}
\item[]March 2018
\end{indented}

\begin{abstract}
We study Bayesian inference methods for solving linear inverse
problems, focusing on hierarchical formulations where the prior or the likelihood function depend on unspecified hyperparameters. 
In practice, these hyperparameters are often determined via an empirical
Bayesian method that 
maximizes the marginal likelihood function, i.e., the probability density of the data conditional on the hyperparameters.
Evaluating the marginal likelihood, however, is computationally challenging for large-scale problems. 
In this work, we present a method to approximately evaluate marginal likelihood functions,  
 based on a low-rank approximation of the update from the prior
 covariance  to the posterior covariance.
We show that this approximation is optimal in a minimax sense. Moreover, we provide an efficient algorithm to implement
the proposed method, based on a combination of the randomized SVD and
a spectral approximation method to compute square roots of the prior
covariance matrix. 
Several numerical examples demonstrate good performance of the
proposed method.

\end{abstract}

%
%
%
%
%

\section{Introduction}
\label{sec:intro}
Bayesian inference approaches have become increasingly popular as a tool to solve inverse
problems~\cite{tarantola2005inverse,kaipio2005statistical,stuart2010inverse}.  In this
setting, the parameters of interest are treated as random variables, endowed with a
prior distribution that encodes information available before the data are
observed. Observations are modeled by their joint probability distribution conditioned on
the parameters of interest, which defines the likelihood function; this distribution
incorporates the forward model and a stochastic description of measurement or model
errors. The posterior distribution of the unknown parameters is then obtained via Bayes'
formula, which combines the information provided by the prior and the data. Here we focus
on a particular class of Bayesian inverse problems, where the forward model is linear and
both the prior and the error distributions are Gaussian. Such \textit{linear--Gaussian}
inverse problems arise in many real-world
applications~\cite{tarantola2005inverse,kaipio2005statistical}.

In practice, a difficulty in implementing Bayesian inference for such
problems is that the prior and/or the likelihood function may contain
unspecified hyperparameters. For instance, the prior distribution
usually encodes some kind of correlation among the inversion
parameters, but the correlation length is often
unknown. Alternatively, the variance and/or the mean of the
observational errors may not be available, thus yielding unspecified
hyperparameters in the likelihood function.
A natural idea to address such problems is to construct a \textit{hierarchical}
Bayesian model for both the inversion parameters and the
hyperparameters. Then both sets of parameters can be inferred from data
through the characterization of their {joint} posterior
distribution.  This inferential approach is often called ``fully
Bayesian.'' A fully Bayesian method can be prohibitively expensive for
large-scale inverse problems, however, as it requires sampling from a 
posterior distribution that is higher dimensional and no longer
Gaussian.  An alternative strategy is the empirical Bayes (EB)
approach~\cite{casella1985introduction,carlin2000bayes}, which first
estimates the hyperparameters by maximizing their marginal likelihood
function, i.e., the probability density of the data conditioned only on the
hyperparameters, and then plugs in the estimated values to compute the
posterior of the inversion parameters.
The use of the EB method has been theoretically
justified~\cite{petrone2014bayes,rousseau2015asymptotic}: roughly
speaking, these theoretical results show that, under certain
conditions and for a sufficiently large sample size, the EB method
leads to similar inferential conclusions as fully Bayesian inference.
At the same time, the EB method is more computationally efficient
than the fully Bayesian approach, as it does not require characterizing
the joint posterior distribution.

Nonetheless, the numerical implementation of EB methods remains
computationally taxing when dimensionality of the inversion
parameters is high, as it involves maximizing the marginal likelihood
function, which in turn requires ``integrating
out'' the inversion parameters for each evaluation. (Details about the
computational cost of evaluating the marginal likelihood function are
given in Section~\ref{sec:setup}.)  The goal of this work is to
present an \textit{approximate EB method} that can significantly
reduce the computational cost of solving linear inverse problems with
hyperparameters. Specifically, our method evaluates the
marginal likelihood function by using a \textit{low-rank update}
approximation of the posterior covariance matrix, introduced in previous work for the
non-hierarchical setting, e.g., \cite{flath2011fast,bui2013computational,martin2012stochastic,alexanderian2016bayesian,spantini2015optimal,spantini2016goal}.
The intuition behind the low-rank update approximation is that the
data may be informative, relative to the prior, only on a
\textit{low-dimensional subspace} of the entire parameter space.
%
%
As a result, one can consider approximations of the posterior
covariance matrix in the form of low-rank negative semidefinite updates of
the prior covariance matrix, where the update is obtained by solving a
generalized eigenproblem involving the log-likelihood Hessian and the
prior precision matrix. The optimality properties of this method are
studied in~\cite{spantini2015optimal}, which shows that this low-rank
approximation is {optimal} with respect to a specific class of
loss functions.

Using the approximation of the posterior covariance developed in
\cite{flath2011fast, bui2013computational, spantini2015optimal}, we introduce a new
method to efficiently compute the marginal likelihood function.
We prove that the proposed
method yields an \emph{optimal} approximation of the marginal
likelihood function in a minimax sense.
Our last contribution lies in the numerical implementation.  Unlike
the inverse problems considered in the work mentioned above,
where the prior is fixed, our problems require repeatedly
computing the square roots of different prior covariance matrices---which
can be prohibitively expensive when the dimensionality of the problem
is high.  To address the issue, we use the spectral approximation
developed in \cite{jiang2013fast} to compute the square root of the
prior covariance matrix, resulting in a very efficient algorithm for
evaluating (and maximizing) the marginal likelihood function.

%
The rest of this paper is organized as follows. In Section
\ref{sec:setup} we introduce the basic setup of the empirical Bayesian
method for solving inverse problems.  The low-rank update
approximation for evaluating the log marginal likelihood function is
developed and analyzed in Section \ref{sec:lr}, and the detailed
numerical implementation of the proposed method is discussed in
Section~\ref{sec:algorithm}.  Section \ref{sec:examples} provides two
numerical examples to demonstrate the performance of the proposed
method, and Section \ref{sec:conclusions} offers concluding remarks.

\section{Problem setup}\label{sec:setup}

Consider a linear inverse problem in a standard setting:
\[ \-y=G\-x + \bm{\eta}\]
where $\-y\in \R^m$ is the data, $\-x\in \R^n$ is the unknown, $G$ is a $n\times m$ matrix, often called the forward operator, 
and $\bm{\eta}$ is the observation noise. 
Such an inverse problem can be solved with a Bayesian inference
method: namely we assume that the prior probability density of $\-x$
is $\pi(\-x)$ and the likelihood function is $\pi(\-y|\-x)$, and thus the posterior is given by Bayes' theorem:
\[
\pi(\-x|\-y) = \frac{\pi(\-y|\-x)\pi(\-x)}{\pi(\-y)}.
\]
{ Throughout this paper, we use the notation $\pi(\cdot)$
  as a generic representation of probability density; the specific
  density is made clear by its arguments.}
We further assume a Gaussian likelihood and a Gaussian
prior with a non-singular covariance matrix $\Gpr \succ 0$  and, without loss of generality,
zero mean:
\begin{equation}\label{eq:GBLM}
\-y\mid \-x \sim \Gauss(G\-x,\,\Gobs),\quad \-x\sim \Gauss(\-0,\,\Gpr).
\end{equation}
In this setting, the resulting posterior distribution is also Gaussian
\begin{equation*}
\-x \mid  \-y \sim \Gauss(\mupos, \Gpos),
\end{equation*}
with mean and covariance matrix given by
\begin{equation}\label{eq:postmoms}
\mupos = \Gpos \,G^\top \Gobs^{-1}\,\-y\quad \mbox{and} \quad \Gpos = \left(H + \Gpr^{-1}\right)^{-1},
\end{equation}
where 
\begin{equation}\label{eq:hessian}
H= G^\top\Gobs^{-1}G
\end{equation}
is the Hessian of the log-likelihood.

Now we consider a more complex situation, where the matrices $G$,
$\Gpr$ and $\Gobs$ (or some of them) depend on a vector of unspecified
hyperparameters $\thetab \in \R^p$.
As mentioned in the introduction, a popular method to deal with such problems is the empirical Bayesian (EB) approach, which determines 
$\thetab$ by maximizing the marginal likelihood function:
\begin{equation}
\max_{\thetab} \pi(\-y|\thetab) =
\int\pi(\-y|\-x,\thetab)\pi(\-x|\thetab) d\-x 
\label{e:em}
\end{equation}
or the marginal posterior density:
\[
\max_{\thetab} \pi(\thetab|\-y) = p(\thetab)\int\pi(\-y|\-x,\thetab)\pi(\-x|\thetab)  d\-x 
\]
where $p(\thetab)$ is the prior density of $\thetab$. 
Note that the computations of these two objectives are very similar and so we shall only discuss \eqref{e:em}. 
It is easy to see that the optimization problem in \eqref{e:em} 
 is equivalent to minimizing the negative log marginal
 likelihood:
\begin{eqnarray}
{\min_{\thetab}  } -\log \pi(\-y|\thetab)\nonumber\\
=\frac12\-y^T\Gobs^{-1}\-y+\frac12\log|\Gobs|-\frac12 \-z^T\Gpos\-z
+\frac12\log\frac{|\Gpr|}{|\Gpos|}, \label{e:eb1}
\end{eqnarray}
where $G$, $\Gpr$ and $\Gobs$ depend on $\thetab$ and $\-z= G^T\Gobs^{-1}\-y$.
Note that an important special case arises when only the prior depends
on the hyperparameter $\thetab$ and the likelihood function is fixed;
in this case, we can simply minimize  
\begin{equation}
  L(\thetab,\-z):= -\frac12 \-z^T\Gpos\-z
+\frac12\log\frac{|\Gpr|}{|\Gpos|}. \label{e:minL}
\end{equation}
For conciseness, below we will present our method for \eqref{e:minL}, 
 while noting that all the results can be trivially extended to \eqref{e:eb1}.

Direct evaluation of \eqref{e:minL} is not desirable for large scale problems, 
as it requires several operations with $O(n^3)$ complexity. 
In what follows, we present an accurate and  efficient---with $O(n^2 r)$ complexity for some $r\ll n$---method to approximate $L(\thetab)$, based on
a rank-$r$ update approximation of $\Gpos$.

\section{The optimal approximation method} \label{sec:lr}

\subsection{Optimal low-rank update approximation of the posterior covariance}
The proposed method begins with
 the {optimal} low-rank update approximation of posterior covariance developed in \cite{bui2013computational,spantini2015optimal},
 which is briefly reviewed here. 
 Note that $\Gpos$ can be written as a non-positive update of $\Gpr$:
$$ 
\Gpos = \Gpr - KK^\top,
$$
where
$$
KK^\top = \Gpr \,G^\top\Gy^{-1} G\, \Gpr
$$ 
and $\Gy = \Gobs + G\,\Gpr \,G^\top$ is the covariance of the marginal
distribution of $\-y$. This update of $\Gpr$ is negative semidefinite
because the data add information; they cannot increase the prior
variance in any direction. 
As has been discussed in previous
work~\cite{spantini2015optimal}, in many practical problems,  the low-rank structure often lies in 
the update of $\Gpr$ that yields $\Gpos$, rather than  in $\Gpos$
itself. Hence, \cite{spantini2015optimal} proposes to use the following class of positive definite matrices to approximate $\Gpos$:
\begin{equation} \label{eq:class_M}
    {\Mc}_r=\left\{ \Gpr-BB^\top\succ0  :\mathrm{rank}(B)\le r  \right\}.
\end{equation}

To establish optimality statements regarding the approximation of a covariance matrix, 
\cite{spantini2015optimal} adopts the following distance between
symmetric positive definite (SPD) matrices of size $n$:
\[
\df^2(A,B)=\trace \left [ \,\ln^2 (\,A^{-1/2}BA^{-1/2}\,)\, \right ]=\sum_{i=1}^n \ln^2(\sigma_i), 
\]
where $(\sigma_i)_{i=1}^n$ are the generalized eigenvalues of the pencil
$(A,B)$. 
This metric was first introduced by Rao in \cite{rao1945information}, and can measure the difference between two Gaussian distributions with the same mean. 
We direct interested readers to \cite{spantini2016goal} for a detailed discussion and other applications of  this metric. 
It is important to note that this distance is generalized in \cite{spantini2015optimal} to be
\[
\dis(A,B)=\sum_{i=1}^n f(\sigma_i), 
\]
where $f$ is a function in $C^1(\R^+)$ that satisfies $f'(x)(1-x)<0$ for any $x\neq1$, and $\lim_{x\rightarrow\infty} f(x) = \infty$.
This generalization will be used for the proof of our optimality statement in next section. 
Thus we seek a low-rank approximation $\Gposh$ of the covariance matrix $\Gpos$,   such that 
\begin{equation}
\Gposh = \arg \min_{\Gamma \in \Mc}\dis(\Gpos,\Gamma).\label{e:mindis}
\end{equation} 
The solution of \eqref{e:mindis} can be derived analytically regardless of the specific choice of $f$.
Specifically, let $\Gprs$ be any square root of
  the prior covariance matrix such that
  $\Gpr=\Gprs\,\Gprst$. We define the prior-preconditioned Hessian as
\begin{equation}\label{eq:precondH}
\widehat{H}=\Gprst H\,\Gprs,
\end{equation}
which plays an essential role in our numerical method. 
Now let $(\delta_i^2,\-w_i)$ be the
  eigenvalue-eigenvector pairs of $\widehat{H}$ 
with the ordering $\delta_i^2\ge\delta_{i+1}^2$;
then a solution of \eqref{e:mindis} is given by:
  \begin{equation} \label{e:minimizer_theorem}
    \Gposh=\Gpr-BB^\top, \quad BB^\top=\sum_{i=1}^r \delta_i^2 \left( 1 + \delta_i^2 \right)^{-1} \widehat{\-w}_i \widehat{\-w}_i^\top,
    \quad \widehat{\-w}_i=\Gprs \-w_i .
  \end{equation}
  The corresponding minimum  distance is 
  \[ 
  \dis(\Gpos,\Gposh) = f(1)r+f{\sum_{i=r+1}^n  f(1/(1+\delta_i^2))}.
  \] 
The minimizer \eqref{e:minimizer_theorem} is unique if the
  first $r$ eigenvalues of $\widehat{H}$ are distinct.

\subsection{Approximating the log-likelihood function}
Now we apply the optimal low rank approximation to our problem. 
The idea is rather straightforward: we approximate $L$ in  \eqref{e:minL} with 
\begin{equation}
\widehat{L}(\thetab,\-z):=-\frac12 \-z^\top\,\Gposh\,\-z
+\frac12\log\frac{|\Gpr|}{|\Gposh|}, \label{e:Lhat}
\end{equation}
for some approximate posterior covariance matrix $\Gposh$. 
Next we shall derive the appropriate choice of $\Gposh$. 
To do this, 
 we need to impose an additional assumption on the approximate posterior covariance matrix: $\Gposh-\Gpos \succeq0$, which means that the approximation itself should not create any new information.
 As a result, the class of matrices for approximating $\Gpos$ becomes
\begin{equation} \label{eq:class_M2}
    {\Mc}_r'=\left\{\Gposh= (\Gpr-BB^\top) : \Gposh-\Gpos\succeq0 ,\, \mathrm{rank}(B)\le r  \right\}
\end{equation}
for some maximum rank $r$. 
Next we shall consider the approximations of the two terms $\log({|\Gpr|}/{|\Gpos|})$ and  $\-z^\top\,\Gpos\,\-z$ in \eqref{e:minL} separately.

\subsubsection{Approximating the log--determinant term.}
First, we consider finding a matrix $\Gposh\in \Mc_r'$ to approximate
$\log ({\vert \Gpr \vert}/{ \vert \Gpos \vert})$ with 
 $\log ({|\Gpr|}/{|\Gposh|})$. 
In this setting, it is easy to see that the approximation error in the
log-marginal likelihood \eqref{e:minL} is  
$\vert \log({|\Gpos|}/{|\Gposh|}) \vert $, 
and  a natural way to determine $\Gposh$ is to seek a $\Gposh\in \Mc_r'$ that minimizes this approximation error. 
To this end, we have the following theorem:
\begin{theorem}\label{th:err1}
Suppose that we approximate  $\log ({|\Gpr|}/{|\Gpos|})$ with 
 $\log ({|\Gpr|}/{|\Gposh|})$ for some  $\Gposh\in\Mc_r'$.  
The matrix $\Gposh\in\Mc'_r$ that minimizes the resulting approximation error, i.e., the solution of 
\begin{equation} 
\min_{\Gposh\in\Mc_r'}   \left \vert \log\frac{|\Gpos|}{|\Gposh|}
\right \vert , \label{e:minerr}
\end{equation}
is given by \eqref{e:minimizer_theorem}. 
Moreover, the optimal approximation and the associated error are, respectively, 
\begin{equation}
\log\frac{|\Gpr|}{|\Gposh|} = \sum_{i=1}^r \log(1+\delta_i^2) \ \mathit{and} \ \log\frac{|\Gposh|}{|\Gpos|} = \sum_{i=r+1}^n \log(1+\delta_i^2). \label{e:optval}
\end{equation}
\end{theorem}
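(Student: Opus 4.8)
The plan is to pass to the prior-preconditioned coordinates, where every determinant ratio becomes a product over the eigenvalues $\delta_i^2$ of $\widehat H$, and then solve a transparent eigenvalue optimization. First I would record the identity $\Gpos = \Gprs(\widehat H + I)^{-1}\Gprst$, which follows from $\Gpos^{-1} = H + \Gpr^{-1}$ together with $\Gprst\Gpr^{-1}\Gprs = I$ (using $\Gpr = \Gprs\Gprst$ and the invertibility of the square root $\Gprs$). Writing the eigendecomposition $\widehat H = \sum_{i=1}^n \delta_i^2\,\-w_i\-w_i^\top$ with $\{\-w_i\}$ orthonormal, this yields $\log(|\Gpr|/|\Gpos|) = \sum_{i=1}^n \log(1+\delta_i^2)$. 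I would then parametrize an arbitrary candidate by $B = \Gprs C$, so that $\Gposh = \Gprs(I - CC^\top)\Gprst$, and set $M := I - CC^\top$, giving $\log(|\Gpr|/|\Gposh|) = -\log\det M$. By congruence with the invertible $\Gprs$, membership $\Gposh\in\Mc_r'$ translates into three conditions on $M$: the rank bound $\mathrm{rank}(I-M)=\mathrm{rank}(CC^\top)\le r$; the structural relation $M\preceq I$ (automatic, since $CC^\top\succeq0$); and the no-new-information constraint $\Gposh-\Gpos\succeq0$, which is equivalent to $M\succeq(\widehat H+I)^{-1}$.

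With these reductions in place the objective simplifies directly. Since $\log(|\Gpos|/|\Gposh|) = -\sum_{i=1}^n\log(1+\delta_i^2) - \log\det M$, and the constraint $M\succeq(\widehat H+I)^{-1}$ forces $\det M \ge \det(\widehat H+I)^{-1} = \prod_i(1+\delta_i^2)^{-1}$ by monotonicity of the determinant on the positive-definite cone, the expression inside the absolute value is nonpositive. Hence $|\log(|\Gpos|/|\Gposh|)| = \log(|\Gposh|/|\Gpos|) = \sum_{i=1}^n\log(1+\delta_i^2) + \log\det M$, and minimizing the approximation error \eqref{e:minerr} over $\Mc_r'$ is equivalent to minimizing $\det M$ subject to $(\widehat H+I)^{-1}\preceq M\preceq I$ and $\mathrm{rank}(I-M)\le r$.

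This last eigenvalue optimization is the crux. Let $\mu_1\le\cdots\le\mu_n$ be the eigenvalues of $M$ and $p_i := (1+\delta_i^2)^{-1}$ those of $(\widehat H+I)^{-1}$, noting $p_1\le\cdots\le p_n$ because $\delta_1^2\ge\cdots\ge\delta_n^2$. Because $M\preceq I$ and $\mathrm{rank}(I-M)\le r$, at least $n-r$ of the $\mu_i$ equal $1$; as the $\mu_i$ are sorted increasingly, the eigenvalues strictly below $1$ form a prefix $\{1,\dots,s\}$ with $s\le r$. Eigenvalue monotonicity (Weyl) applied to $M\succeq(\widehat H+I)^{-1}$ gives $\mu_i\ge p_i$ for every $i$, so, using $p_i\le1$, one obtains $\det M = \prod_{i=1}^s\mu_i \ge \prod_{i=1}^s p_i \ge \prod_{i=1}^r p_i = \prod_{i=1}^r(1+\delta_i^2)^{-1}$. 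Finally I would check that the choice \eqref{e:minimizer_theorem}, whose $M$ has eigenvalues $p_1,\dots,p_r$ along $\-w_1,\dots,\-w_r$ and $1$ elsewhere, is feasible and attains this bound, yielding the stated optimal value $\log(|\Gpr|/|\Gposh|)=\sum_{i=1}^r\log(1+\delta_i^2)$ and error $\log(|\Gposh|/|\Gpos|)=\sum_{i=r+1}^n\log(1+\delta_i^2)$. I expect the main obstacle to be exactly this combinatorial eigenvalue step: one must argue carefully that the rank constraint on $I-M$ together with $M\preceq I$ pins the sub-unit eigenvalues to the smallest indices, so that the pointwise monotonicity bound $\mu_i\ge p_i$ can be converted into a product bound matching the top-$r$ spectrum of $\widehat H$.
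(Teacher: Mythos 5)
Your proof is correct, and it takes a genuinely different route from the paper's. The paper proves this theorem by invoking the optimality machinery of Spantini et al.: it specializes the generalized distance $\dis$ to $f(x)=\vert\log x\vert$, observes the additive splitting $\dis_1(\Gpr,\Gpos)=\dis_1(\Gpr,\Gposh)+\dis_1(\Gposh,\Gpos)$ which holds precisely because $\Gposh\in\Mc_r'$, and then cites the known minimizer \eqref{e:minimizer_theorem} of $\dis_1(\Gpos,\cdot)$ over the larger class $\Mc_r$, checking that it happens to lie in $\Mc_r'$. You instead work entirely in prior-preconditioned coordinates: the congruence $\Gposh=\Gprs M\Gprst$ with $M=I-CC^\top$ converts the problem into minimizing $\det M$ over the set $\{(\widehat H+I)^{-1}\preceq M\preceq I,\ \mathrm{rank}(I-M)\le r\}$, which you solve from scratch via Weyl's eigenvalue monotonicity and the observation that the rank and ordering constraints pin the sub-unit eigenvalues of $M$ to the first $s\le r$ positions. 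Your version is longer but self-contained and arguably more robust: it does not depend on the external optimality theorem (whose hypotheses require $f\in C^1(\R^+)$, which $\vert\log x\vert$ fails at $x=1$, a point the paper does not address), and it makes explicit why the constraint $\Gposh-\Gpos\succeq0$ matters --- it is exactly what forces $\det M\ge\det(\widehat H+I)^{-1}$ and hence fixes the sign inside the absolute value, which is the content of the paper's identity \eqref{e:D1_sep}. The paper's route is shorter and places the result inside a unified family of loss functions; yours buys an elementary, verifiable argument and a cleaner view of where each constraint is used.
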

\begin{proof}
We prove this theorem using the optimality results in \cite{spantini2015optimal}.
To start, we choose a particular distance metric by letting
 \[f(x) =  \vert \log x \vert .\]
 We denote the resulting distance metric as $\dis_1(A,B)$ to indicate
 that such a metric is actually the $1$-norm of
 $(\ln(\sigma_1),\,\ldots,\,\ln(\sigma_n))$ while $\df$ is the $2$-norm
 of the same sequence.
It can be verified that 
\[
\log\frac{|\Gpr|}{|\Gpos|} = \sum_{i=1}^n \log(1+\delta_i^2) = \dis_1(\Gpr,\Gpos),
\]
as $\log(1+\delta_i^2)\geq0$ for all $i\in\N$.
Since the approximate posterior covariance $\Gposh \in \Mc_r'$, we can show
\begin{equation}
\dis_1(\Gpr,\Gpos) = \dis_1(\Gpr,\Gposh)+\dis_1(\Gposh,\Gpos), \label{e:D1_sep}
\end{equation}
where $\dis_1(\Gpr,\Gposh) = \log({|\Gpr|}/{|\Gposh|})$ is the approximation and $\dis_1(\Gposh,\Gpos) =\log({|\Gposh|}/{|\Gpos|})$ is the error associated with it. 
Note that \eqref{e:D1_sep} does not hold without the assumption $\Gposh\in\Mc_r'$.
Thus \eqref{e:minerr} can be rewritten as, 
\begin{equation}
\min_{\Gposh\in \Mc_r'}\dis_1(\Gpos,\Gposh).\label{e:mindis1}
\end{equation}
Recall that the solution of $\min_{\Gposh\in \Mc_r}\dis_1(\Gpos,\Gposh)$
 is given by \eqref{e:minimizer_theorem},
and it is easy to verify that the matrix $\Gposh$ given by \eqref{e:minimizer_theorem} is in $\Mc_r'$,
which implies that \eqref{e:minimizer_theorem}  also provides the solution of \eqref{e:mindis1}.
As a result, the optimal approximation and the associated approximation error are given by \eqref{e:optval},
which completes the proof. 
\end{proof}

\subsubsection{Approximating the quadratic term.}
Similarly, we can also find an approximate posterior covariance $\Gposh\in\Mc_r'$ and approximate $\-z^\top\,\Gpos\,\-z$ with $\-z^\top\,\Gposh\,\-z$. 
This problem is a bit more complicated: 
since $\-z$ (which is a linear transformation of the data $\-y$) is random, we cannot determine the matrix $\Gposh$ by directly minimizing the approximation error. In this case,  a useful treatment is to apply the minimax criterion, i.e., to seek a matrix $\Gposh\in\Mc'_r$ that minimizes the maximum approximation 
error with respect to $\-z$. In particular, for the maximum error to exist, we shall require $\-z$ to be bounded:
   $\-z\in Z_c = \{\-z : \|\-z\|_2 \leq c \}$ for a constant $c>0$. (See Remark~\ref{rem:aboutc} for a discussion of this boundedness assumption.)
The following theorem provides the optimal solution to this problem. 
\begin{theorem}\label{th:err2}
Suppose that we approximate  $\-z^\top\,\Gpos\,\-z$ with 
 $\-z^\top\,\Gposh\,\-z$, for some  $\Gposh\in\Mc_r'$.   
The matrix $\Gposh\in\Mc_r'$ that achieves the minimax approximation error, i.e., the solution of 
\begin{equation} 
\min_{\Gposh\in\Mc_r'} \max_{\-z\in Z_c} |\-z^\top\,\Gpos\,\-z-\-z^\top\,\Gposh\,\-z| \label{e:minerr2}
\end{equation}
is given by \eqref{e:minimizer_theorem}. 
Moreover, the resulting approximation is
\begin{equation}
\-z^\top\, \Gposh \,\-z=\-z^\top\,\Gpr\,\-z- \-z^\top\, \-b, \quad \-b=\sum_{i=1}^r \frac{\delta^2_i}{{ 1 + \delta_i^2 }} (\widehat{\-w}_i^\top\-z)\widehat{\-w}_i, \label{e:optval2}
\end{equation}
and the associated approximation error is
\begin{equation}
 |\-z^\top \,(\Gposh-\Gpos) \,\-z| = \-z^\top \left(\sum_{i=r+1}^n \frac{\delta^2_i}{{ 1 + \delta_i^2 }} (\widehat{\-w}_i^\top\-z)\widehat{\-w}_i \right).
 \label{e:err3}
\end{equation}
\end{theorem}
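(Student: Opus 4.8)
The plan is to collapse the minimax into a single eigenvalue-minimization over $\Mc_r'$, and then to solve that reduced problem by a Courant--Fischer argument. The first move uses the defining property of $\Mc_r'$: every admissible $\Gposh$ satisfies $\Gposh-\Gpos\succeq0$, so the integrand is nonnegative and the absolute value disappears, and for fixed $\Gposh$ the inner maximization is that of a positive semidefinite quadratic form over a ball,
\[
\max_{\-z\in Z_c}|\-z^\top\Gpos\-z-\-z^\top\Gposh\-z|=\max_{\|\-z\|_2\le c}\-z^\top(\Gposh-\Gpos)\-z=c^2\,\lambda_{\max}(\Gposh-\Gpos).
\]
Hence the minimax problem \eqref{e:minerr2} is equivalent to minimizing the largest eigenvalue of $\Gposh-\Gpos$ over $\Gposh\in\Mc_r'$.

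Next I would pass to the prior-preconditioned coordinates already used to state \eqref{e:minimizer_theorem}. Writing $\Gpos=\Gprs\,(I+\widehat H)^{-1}\,\Gprst$ and $\Gpr=\Gprs\Gprst$, and representing the update as $BB^\top=\Gprs\,M\,\Gprst$ with $M\succeq0$ and $\mathrm{rank}(M)\le r$, one obtains
\[
\Gposh-\Gpos=\Gprs\,(\Delta-M)\,\Gprst,\qquad \Delta:=\widehat H\,(I+\widehat H)^{-1}=\sum_{i=1}^n\frac{\delta_i^2}{1+\delta_i^2}\,\-w_i\-w_i^\top,
\]
where feasibility $\Gposh-\Gpos\succeq0$ becomes $0\preceq M\preceq\Delta$. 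Because the $\-w_i$ are orthonormal, $\Delta$ is diagonal in this basis with eigenvalues $\delta_i^2/(1+\delta_i^2)$ in decreasing order, and the engine of the proof is the matrix-nearness claim that, over all $M$ with $0\preceq M\preceq\Delta$ and rank at most $r$, the quantity $\lambda_{\max}(\Delta-M)$ is minimized—with value $\delta_{r+1}^2/(1+\delta_{r+1}^2)$—by the truncation $M=\sum_{i=1}^r\frac{\delta_i^2}{1+\delta_i^2}\-w_i\-w_i^\top$. The lower bound follows from Courant--Fischer: since $\mathrm{rank}(M)\le r$, the kernel of $M$ meets the $(r+1)$-dimensional top eigenspace $\mathrm{span}(\-w_1,\dots,\-w_{r+1})$ in a nonzero unit vector $\-v$, and for that $\-v$ one has $\-v^\top(\Delta-M)\-v=\-v^\top\Delta\-v\ge\delta_{r+1}^2/(1+\delta_{r+1}^2)$, while the truncation attains it. Mapping $M$ back through $\Gprs(\cdot)\Gprst$ reproduces the update $BB^\top=\sum_{i\le r}\frac{\delta_i^2}{1+\delta_i^2}\widehat{\-w}_i\widehat{\-w}_i^\top$ of \eqref{e:minimizer_theorem}, and substituting this into $\-z^\top\Gposh\-z$ and $\-z^\top(\Gposh-\Gpos)\-z$ yields \eqref{e:optval2} and \eqref{e:err3} directly.

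I expect the main obstacle to be the passage between the two displays above, namely relating $\lambda_{\max}(\Gposh-\Gpos)$ to $\lambda_{\max}(\Delta-M)$. The map $N\mapsto\Gprs N\Gprst$ is a congruence, not a similarity, so in general $\lambda_{\max}(\Gprs(\Delta-M)\Gprst)\neq\lambda_{\max}(\Delta-M)$; minimizing the former is the matrix problem $\min\{\lambda_{\max}(C-S):0\preceq S\preceq C,\ \mathrm{rank}(S)\le r\}$ with $C=\Gpr-\Gpos$, whose optimum truncates the eigendecomposition of $C$ itself. Since the eigenvectors of $C$ are the \emph{non-orthonormal} vectors $\widehat{\-w}_i$, that truncation need not agree with selecting the largest $\delta_i^2$, i.e.\ with \eqref{e:minimizer_theorem}. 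The clean identification holds precisely when the error in $\-z$ is measured in the prior-induced geometry—when $Z_c$ is the ellipsoid $\{\-z:\-z^\top\Gpr\-z\le c^2\}$ rather than the Euclidean ball—because then the congruence by $\Gprs$ is an isometry and the Courant--Fischer step applies verbatim. I would therefore resolve the proof either by recasting $Z_c$ in this prior-weighted norm (which is the natural scaling, as $\-z$ enters $L$ only through $\Gpos\-z$) or, to retain the Euclidean ball, by supplying an extra argument that controls the anisotropy of $\Gprs$; reconciling the chosen norm on $Z_c$ with the preconditioned spectrum is the crux.
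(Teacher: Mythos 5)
Your opening reduction is exactly the paper's first step: the paper likewise observes that for fixed $\Gposh\in\Mc_r'$ the inner maximum over the Euclidean ball $Z_c$ equals $c^2\rho(\Gposh-\Gpos)$, and thereby rewrites \eqref{e:minerr2} as $\min_{\mathrm{rank}(B)\le r}\rho(KK^\top-BB^\top)$. At that point, however, the paper simply asserts that ``it follows immediately'' that the minimizer is \eqref{e:minimizer_theorem}. The obstacle you isolate in your last paragraph is precisely the step the paper elides, and your diagnosis is correct: by Eckart--Young--Mirsky, the rank-$r$ minimizer of $\rho(KK^\top-BB^\top)$ is the truncated \emph{Euclidean} eigendecomposition of $KK^\top=\Gprs\Delta\Gprst$, whereas \eqref{e:minimizer_theorem} truncates the expansion in the vectors $\widehat{\-w}_i=\Gprs\-w_i$, which are not orthonormal; the two coincide only when $\Gprs$ is (a scalar multiple of) an orthogonal matrix. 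A two-dimensional check confirms the gap is real: take $\Gprs=\mathrm{diag}(1,2)$ and $\widehat{H}=\mathrm{diag}(9,1)$, so $\Delta=\mathrm{diag}(0.9,\,0.5)$ and $KK^\top=\mathrm{diag}(0.9,\,2)$; for $r=1$ the candidate \eqref{e:minimizer_theorem} leaves a residual of spectral radius $2$, while the (feasible) Euclidean truncation leaves $0.9$. So with $Z_c$ a Euclidean ball the theorem does not hold for general $\Gpr$, and the defect lies in the paper's proof, not in your attempt.

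Your proposed repair is the right one and does complete the argument. Taking $Z_c=\{\-z:\-z^\top\Gpr\,\-z\le c^2\}$ and substituting $\-u=\Gprst\-z$ turns the inner maximum into $c^2\lambda_{\max}(\Delta-M)$ with feasibility $0\preceq M\preceq\Delta$, $\mathrm{rank}(M)\le r$; your Courant--Fischer step (a unit vector in $\ker(M)\cap\mathrm{span}(\-w_1,\dots,\-w_{r+1})$) gives the sharp lower bound $\delta_{r+1}^2/(1+\delta_{r+1}^2)$, attained by the truncation, and mapping back through the congruence recovers \eqref{e:minimizer_theorem} and hence \eqref{e:optval2}--\eqref{e:err3}. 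Note this weighted-ball formulation is also what makes Theorem~\ref{th:err2} consistent with Theorem~\ref{th:err1} and with the optimality framework of the cited reference, both of which measure everything in the prior-preconditioned geometry. The one presentational caveat is that your write-up should state explicitly that it proves the theorem under the modified definition of $Z_c$ (or under the additional hypothesis $\Gpr\propto I$), since as literally stated the Euclidean-ball claim fails.
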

\begin{proof}
For any given $\Gposh\in \Mc_r'$,  it is easy to see that the solution of 
\[\max_{\-z\in Z_c}|\-z^\top\,\Gpos\,\-z-\-z^\top\,\Gposh\,\-z| = \-z^\top\,(\Gposh-\Gpos)\,\-z\]
 is $\-z=c {\-v}_{\max}$ where $\-v_{\max}$ is the eigenvector of the largest eigenvalue of $\Gposh-\Gpos$, and the maximum error is 
 $c^2\rho(\Gposh-\Gpos) $ where $\rho(\cdot)$ is the spectral
 radius. Thus \eqref{e:minerr2} becomes 
$\min_{\Gposh\in\Mc_r'}\rho(\Gposh-\Gpos)$, or equivalently, 
$$\min_{\mathrm{rank}(B)\leq r} \rho(KK^\top-BB^\top),$$
and it follows immediately that the optimal $\Gposh$ is given by \eqref{e:minimizer_theorem}.
Substituting \eqref{e:minimizer_theorem} into $\-z^\top\,\Gposh\,\-z$ and 
 $\-z^\top\,(\Gposh -\Gpos )\,\-z$ yields \eqref{e:optval2} and \eqref{e:err3}, respectively. 
\end{proof}

{In principle, of course, $\|\bm{z}\|_2$ is not bounded from above, since $\bm{z}$ is normally distributed.
However, imposing the boundedness assumption, which considerably simplifies the theoretical analysis,  does not
limit the applicability of the method or affect the optimal solution. 
Specifically, we have the following remarks:}
\begin{remark}\label{rem:aboutc}

{ First, since $\bm{z}$ follows a Gaussian distribution, one can always choose a constant $c$ such that the inequality $ \|\bm{z}\|_2 \leq c$ holds with probability arbitrarily close to one.
Second, 
the minimax solution $\Gposh$ is \textit{independent} of the value of $c$.}
\end{remark}


Now we combine the two approximate treatments, which is essentially to approximate \eqref{e:minL} by \eqref{e:Lhat} with
$\Gposh$ given by \eqref{e:minimizer_theorem}. 
It is easy to see that the approximation error is 
\begin{equation}
\Delta L(\thetab,\-z) = \frac12\-z^\top\,(\Gposh-\Gpos)\,\-z+\frac12\log({|\Gposh|}/{|\Gpos|}), \label{e:DL}
\end{equation}
and we have the following result regarding its optimality: 
\begin{corollary}
Suppose that we approximate \eqref{e:minL} with \eqref{e:Lhat} for some matrix $\Gposh\in \Mc_r'$. 
 The matrix $\Gposh$ given by \eqref{e:minimizer_theorem} achieves the minimax approximation error, i.e., it solves
$$\min_{\Gposh\in\Mc_r'} \max_{\-z\in Z_c} |\Delta L(\thetab,\-z) |.$$ 
\end{corollary}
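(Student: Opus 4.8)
The plan is to exploit the additive structure of $\Delta L$ in \eqref{e:DL} together with the fact that, over the admissible class $\Mc_r'$, its two constituent terms are individually minimized by the \emph{same} matrix \eqref{e:minimizer_theorem}, as already established in Theorems \ref{th:err1} and \ref{th:err2}. The essential observation is that these two terms never compete: the constraint $\Gposh-\Gpos\succeq0$ forces both to be non-negative, so no cancellation can occur inside the absolute value, and the minimax consequently decouples.

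First I would record the sign structure. Since $\Gposh\in\Mc_r'$ satisfies $\Gposh-\Gpos\succeq0$, the quadratic form $\-z^\top(\Gposh-\Gpos)\-z$ is non-negative for every $\-z$, and $\Gposh\succeq\Gpos\succ0$ gives $|\Gposh|\ge|\Gpos|$, whence $\log(|\Gposh|/|\Gpos|)\ge0$. Thus $|\Delta L(\thetab,\-z)|=\Delta L(\thetab,\-z)$, with no absolute value needed. Because the log-determinant term is independent of $\-z$, the inner maximization acts only on the quadratic term, and by the same computation as in the proof of Theorem \ref{th:err2} its maximum over $Z_c$ is $\tfrac{c^2}{2}\rho(\Gposh-\Gpos)$. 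Hence
\[
\max_{\-z\in Z_c}|\Delta L(\thetab,\-z)| = \frac{c^2}{2}\,\rho(\Gposh-\Gpos)+\frac12\log\frac{|\Gposh|}{|\Gpos|}.
\]
It then remains to minimize this sum over $\Gposh\in\Mc_r'$. Here I would invoke the simultaneous optimality of \eqref{e:minimizer_theorem}: Theorem \ref{th:err2} shows it minimizes $\rho(\Gposh-\Gpos)$, while Theorem \ref{th:err1} shows it minimizes $\log(|\Gposh|/|\Gpos|)$, both over $\Mc_r'$. Since the coefficients $c^2/2$ and $1/2$ are positive, a matrix minimizing each summand separately also minimizes their weighted sum; comparing the candidate \eqref{e:minimizer_theorem} against an arbitrary competitor term by term then closes the argument.

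The main (and essentially the only) subtlety is this simultaneous-minimizer phenomenon: in general one cannot minimize a sum by minimizing each term, and the minimax of a sum is not the sum of the minimaxes. What rescues the proof is the non-obvious structural fact that the optimal low-rank update \eqref{e:minimizer_theorem} is optimal at once for the spectral-radius (minimax quadratic) criterion and for the $\dis_1$ log-determinant criterion. I would therefore make a point of stating explicitly that the two preceding theorems deliver the \emph{same} minimizer, since it is this coincidence, rather than any fresh computation, that makes the corollary hold.
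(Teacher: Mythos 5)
Your argument is correct and is essentially the paper's own proof, which simply notes that both terms of $\Delta L$ in \eqref{e:DL} are nonnegative and invokes Theorems~\ref{th:err1} and \ref{th:err2}; you have merely spelled out the decoupling of the inner maximization and the simultaneous-minimizer observation that the paper leaves implicit. No gap here.
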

As both terms on the right hand side of \eqref{e:DL} are nonnegative, 
the corollary  is a direct consequence of Theorems~\ref{th:err1} and \ref{th:err2}.
 
\section{Numerical implementation}\label{sec:algorithm}
Here we discuss the numerical implementation of our approximate method to evaluate \eqref{e:minL}. In principle, this involves two computationally intensive components---both requiring $O(n^3)$ computations under standard numerical treatments. The first task is to compute the eigenvalues of $\widehat{H}$.
Recall that our method only requires the $r$ leading eigenvalues and associated eigenvectors of $\widehat{H}$, which can be computed efficiently with 
a randomized algorithm for the singular value decomposition (SVD)~\cite{liberty2007randomized,halko2011finding}. 
The second task is to compute $\Gprs$, the square root of $\Gpr$.
As will become clear later, we do not necessarily need $\Gprs$; rather, our algorithm only requires the ability to evaluate $\Gprs \Omega$ for a given 
matrix $\Omega$.   
To this end, we resort to the approximation method proposed in \cite{jiang2013fast}. 
We provide a brief description of  the two adopted methods here, tailored according to our specific purposes. 

\subsection{Randomized SVD}\label{sec:rsvd}
The main idea behind the randomized SVD approach is to identify a subspace that captures most of the action of a matrix, using random sampling~\cite{liberty2007randomized,halko2011finding,drineas2006fast}.
 The original matrix is then projected onto this subspace, yielding a
 smaller matrix, and a standard SVD is then applied to the smaller
 matrix to obtain the leading eigenvalues of the original matrix.  A
 randomized algorithm for constructing the rank--$r'$ SVD of an
 $n\times n$ matrix $\widehat{H}$, given in \cite{halko2011finding}, proceeds as follows:
\begin{enumerate}
\item  Draw an $n\times r'$ Gaussian random matrix $\Omega$;
\item Form the $n\times r'$ sample matrix $Y=\widehat{H} \Omega$;
\item Form an $n\times r'$ matrix $Q$ { with orthonormal columns}, such that $Y=Q R$;
\item Form the $r'\times n$ matrix $B = Q^T \widehat{H}$;
\item Compute the SVD of the $r'\times n$ matrix $B= \hat{U} \Sigma V^T$.
\end{enumerate}
According to \cite{halko2011finding}, for this algorithm to be robust, it is important to oversample a bit, namely to choose $r'>r$ if $r$ is the desired rank. 
{  One can obtain a probabilistic error bound, i.e., that
\begin{equation} \|\widehat{H}-QB\|_2 \leq (1+11\sqrt{r'n})\delta_{r+1}, \end{equation}
holds with a probability at least $1-6(r'-r)^{-(r'-r)}$,
under some very mild assumptions on $r'$~\cite{halko2011finding}.}
{ Note that the diagonal entries of $\Sigma$ are the eigenvalues of $\widehat{H}$ and the columns of $V$ are the eigenvectors. 
Since we obtain $r'$ eigenpairs in the algorithm, we take $\{\delta_i,\-w_i\}_{i=1}^r$ to be the $r$ dominant ones among them.}
Above we only present the basic implementation of the randomized SVD method; further details and possible improvements of the method can be found in \cite{halko2011finding} and the references therein.
{  Finally we also want to emphasize that our approximate EB
  method can use the eigenvalues/eigenvectors computed with any
  numerical approach; it is not tailored to or tied with the randomized SVD. }

\subsection{The Chebyshev spectral method for computing $\Gprs$} \label{sec:cheb}
Recall that $\widehat{H}=\Gprst H\,\Gprs$, and as a result the randomized SVD only requires evaluating ${\Gprs} \Omega$ where $\Omega$ is a randomly generated $n\times r'$ matrix. 
Here we adopt the approximate method proposed in \cite{jiang2013fast}, which is based on the following lemma:
\begin{lemma}[Lemma 2.1 in \cite{jiang2013fast}]
Suppose that $D$ is a real symmetric positive definite matrix. Then there exists a polynomial $p(\cdot)$ such that $\sqrt{D} = p(D)$, and the degree of $p$ is equal to the number of distinct eigenvalues of $D$ minus $1$.
\end{lemma}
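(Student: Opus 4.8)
The plan is to reduce the matrix identity to a scalar interpolation problem via the spectral theorem. Since $D$ is real symmetric positive definite, I would write $D = Q\Lambda Q^\top$ with $Q$ orthogonal and $\Lambda=\mathrm{diag}(\lambda_1,\dots,\lambda_n)$, all $\lambda_i>0$, and let $\mu_1<\cdots<\mu_k$ be the $k$ distinct eigenvalues. For any polynomial $p$ one has $p(D)=Q\,p(\Lambda)\,Q^\top$, so $p(D)=\sqrt{D}$ holds if and only if $p(\mu_j)=\sqrt{\mu_j}$ for every $j=1,\dots,k$, the square roots being well defined because each $\mu_j>0$. This turns the claim into the statement that there is a real polynomial of degree exactly $k-1$ interpolating $f(x)=\sqrt{x}$ at the $k$ distinct nodes $\mu_1,\dots,\mu_k$.

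Existence together with the bound $\deg p\le k-1$ is then immediate from classical interpolation: the Lagrange polynomial
\[
p(x)=\sum_{j=1}^{k}\sqrt{\mu_j}\prod_{i\neq j}\frac{x-\mu_i}{\mu_j-\mu_i}
\]
is the unique polynomial of degree at most $k-1$ with $p(\mu_j)=\sqrt{\mu_j}$ for all $j$, and hence satisfies $p(D)=\sqrt{D}$. Equivalently, because $D$ is diagonalizable its minimal polynomial is $\prod_{j=1}^{k}(x-\mu_j)$, of degree $k$, so the commutative algebra $\mathbb{R}[D]$ has dimension $k$ with basis $I,D,\dots,D^{k-1}$; every function of $D$, in particular $\sqrt{D}$, therefore lies in this algebra and is a polynomial in $D$ of degree at most $k-1$.

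The hard part will be showing that the degree is \emph{exactly} $k-1$, i.e.\ that the leading coefficient of $p$ does not vanish. I would identify this leading coefficient with the top divided difference of $f$,
\[
c_{k-1}=f[\mu_1,\dots,\mu_k]=\sum_{j=1}^{k}\frac{\sqrt{\mu_j}}{\prod_{i\neq j}(\mu_j-\mu_i)},
\]
and invoke the mean value theorem for divided differences to write $c_{k-1}=f^{(k-1)}(\xi)/(k-1)!$ for some $\xi\in(\mu_1,\mu_k)\subset(0,\infty)$. Since $f(x)=x^{1/2}$ has $f^{(k-1)}(x)=\big[\prod_{\ell=0}^{k-2}(1/2-\ell)\big]x^{1/2-(k-1)}$ and none of the factors $1/2-\ell$ vanishes (as $1/2$ is not a nonnegative integer), we get $f^{(k-1)}(\xi)\neq0$, hence $c_{k-1}\neq0$ and $\deg p=k-1$.

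In summary, existence and the upper bound on the degree are routine interpolation facts; the only genuine content is the nonvanishing of the top divided difference, which rests on the specific analytic fact that every derivative of $x^{1/2}$ is nonzero on the positive axis. The degenerate case $k=1$ (where $D=\mu_1 I$) is consistent: the empty product gives $c_0=\sqrt{\mu_1}\neq0$, so $p$ is the nonzero constant $\sqrt{\mu_1}$, of degree $0$.
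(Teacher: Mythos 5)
Your proof is correct. Note that the paper itself offers no proof of this statement: it is quoted verbatim as Lemma~2.1 of the cited reference \cite{jiang2013fast}, and the surrounding text only records the consequence $p(D)=Q\,p(\Lambda)\,Q^{\top}$ that motivates the Chebyshev approximation. So there is no in-paper argument to compare against; your route --- spectral theorem to reduce to scalar interpolation at the distinct eigenvalues, Lagrange interpolation for existence with degree at most $k-1$, and the mean value theorem for divided differences applied to $f(x)=x^{1/2}$ (whose derivatives of every order are nonvanishing on $(0,\infty)$, since $1/2$ is not a nonnegative integer) to pin the degree at exactly $k-1$ --- is the standard one, and you have correctly isolated the only nontrivial point, namely the nonvanishing of the top divided difference. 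The positive definiteness of $D$ is used exactly where it must be: to make $\sqrt{\mu_j}$ well defined and to keep the interval $(\mu_1,\mu_k)$ inside the domain where $f^{(k-1)}$ has no zeros.
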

Though $\sqrt{D}$ is exactly equal to a polynomial of $D$, the degree of the polynomial might be very large if $D$ has a large number of distinct eigenvalues. Thus, instead of trying to find the exact polynomial $p$ in $D$ that equals $\sqrt{D}$,  the aforementioned work computes a Chebyshev approximation to it. 

It is clear that $D=O\Lambda O^{T}$, where $\Lambda=\mathrm{diag}(\lambda_{1}\cdots \lambda_{n})$, and $\lambda_{1}\geq\cdots\geq\lambda_{n}$. 
Now suppose that we have a polynomial $p(\cdot)$ such that $p(\lambda) \approx\sqrt{\lambda}$. Then we have
\begin{eqnarray*}
p(D)&=&O \, p(\Lambda) \, O^{T}\\
&=& O \, \mathrm{diag}(p(\lambda_{1})\cdots p(\lambda_{n})) \, O^{T}\\
&\approx &O \, \mathrm{diag}(\sqrt{\lambda_{1}}\cdots \sqrt{\lambda_{n}}) \, O^{T}\\
&=& \sqrt{D}.
\end{eqnarray*}
That is, once we have an approximation of ${\sqrt{\lambda}}$ in the interval $[\lambda_{\min},\lambda_{\max}]$, where $\lambda_{\min}$ and $\lambda_{\max}$ are respectively lower and upper bounds on the eigenvalues of $D$, 
we immediately get an approximation of $\sqrt{D}$.
A popular method to construct the approximation $p(\cdot)$ is the Chebyshev polynomial interpolation. 
The standard Chebyshev polynomials are defined in $[-1,\,1]$ as~\cite{press2007numerical}, 
\begin{equation}
T_k(x) = \cos(k \arccos x),\quad \forall k\in\N, \label{e:cheb}
\end{equation}
and the associated Chebyshev points are given by
\[
x_j = \cos [\frac{(2j+1)\pi}{2k+2}], \quad j=0,1,\ldots,k.
\]
It is well known that the Chebyshev interpolant has spectral accuracy for analytical functions on $[-1,1]$; see, e.g. \cite{fornberg1998practical}. 

Here since we intend to approximate the function $\sqrt{\lambda}$ over the range $[\lambda_{\min},\lambda_{\min}]$ rather than $[-1,\,1]$, we shall use the scaled and shifted Chebyshev polynomials:
\begin{equation}
\tilde{T}_k(x) = T_k(t_ax+t_b),\quad \forall n\in\N,
\end{equation}
and the associated Chebyshev points become
\begin{equation}
x_{j}=\frac{\lambda_{\max}+\lambda_{\min}}{2}+
\frac{\lambda_{\max}-\lambda_{\min}}{2}
\cos(\frac{(2j+1)\pi}{2k+2}), \quad j=0,\,1,\,\ldots,\,k.
\end{equation}
The  interpolant $p_{k}(x)$ can be expressed as~\cite{press2007numerical},
\begin{equation}
p_{k}(x)= \sum_{i=0}^{k}c_{i}\tilde{T_{i}}(x) -\frac{c_0}2,
\end{equation}
where the coefficients $c_{i}$ are given by
\begin{equation}
c_{i}=\frac{2}{k+1}\sum_{j=1}^{k+1}\sqrt{x_{j}}
\tilde{T_{i}}(x_{j}).
\end{equation}
It is easy to verify that the scaled and shifted Chebyshev polynomials satisfy the following recurrence relation, 
\begin{eqnarray}
\tilde{T}_{0}&=&1,\nonumber\\
\tilde{T}_{1}(x)&=&t_{a}x+t_{b},\label{e:recur}\\
\tilde{T}_{i+1}(x)&=&2(t_{a}x+t_{b})\tilde{T}_{i}(x)
-\tilde{T}_{i-1}(x),\nonumber
\end{eqnarray}
with
\[
t_{a}=\frac{2}{\lambda_{\max}-\lambda_{\min}},\quad
t_{b}=\frac{\lambda_{\max}+\lambda_{\min}}
{\lambda_{\max}-\lambda_{\min}}.
\]
Now recall that we actually want to compute $\sqrt{D}\Omega$.
Taking advantage of the recurrence relation~\eqref{e:recur}, we obtain,
\begin{equation}
\sqrt{D}\Omega\approx p_k(D)\Omega=\sum_{i=0}^{k}c_{i}\tilde{T_{i}}(D)\Omega
-\frac{c_{0}\Omega}{2}=
\sum_{i=0}^{k}c_{i}\Omega_{i}-\frac{c_{0}}{2}\Omega_0,
\end{equation}
where 
\begin{eqnarray*}
\Omega_{0}&=&\Omega,\\
\Omega_{1}&=&t_{a}D\Omega_{0}+t_{b}\Omega_{0},\\
\Omega_{i+1}&=&2(t_{a}Dv_{i}+t_{b}\Omega_{i})-\Omega_{i-1}, 
\end{eqnarray*}
for $i=1, \ldots, k-1$. 
The complete procedure for computing $\sqrt{D}\Omega$ with the Chebyshev approximation is given in Algorithm~\ref{alg:sqrtD}.

\begin{algorithm}[t]\label{alg:sqrtD}
 \KwData{$\Omega$, $k$}
 \KwResult{$B \approx \sqrt{D}\Omega$ }
Compute the coefficients,  $t_a$, $t_b$, and  $c_0,\ldots,c_k$\;
$\Omega_0 := \Omega$\;
  $\Omega_1 := t_a D \Omega_0 + t_b \Omega_0$\;
 \For{$i=1$ to $k-1$}{
 $\Omega_{i+1} := 2(t_a D \Omega_i + t_b \Omega_i)-\Omega_{i-1}$\;
 }
 $B:= \sum_{i=0}^kc_i\Omega_i-\frac{c_{0}}{2}\Omega_0$.
 \caption{Chebyshev spectral approximation for computing $\sqrt{D}\Omega$}
\end{algorithm}

Finally we provide some remarks regarding the implementation of the method:
\begin{itemize}
\item The original algorithm present in \cite{jiang2013fast} is to compute the product of $\sqrt{D}$ and a vector, but as is shown in Algorithm~\ref{alg:sqrtD}, its extension to  
 the computation of the product of $\sqrt{D}$ and a matrix is straightforward.
\item The method requires upper and lower bounds on the eigenvalues of $D$, which are computed with an algorithm based on the safeguarded Lanczos method~\cite{jiang2013fast}. As is discussed in \cite{jiang2013fast}, these estimates need not be of high accuracy.  
\item The error bound of the proposed Chebyshev approximation is given by Theorems 3.3 and 3.4 in \cite{jiang2013fast}. 
\item If desired, all the matrix-vector multiplications can be performed with the fast multipole method to further improve efficiency~\cite{jiang2013fast}. 
\end{itemize}

{ \subsection{The complete algorithm}}
{ Now we summarize the complete scheme for
  constructing the approximation $\widehat{L}(\thetab,\-z)$ of \eqref{e:minL} with a given rank $r$, using the methods discussed in Sections~\ref{sec:rsvd} and \ref{sec:cheb}: 
\begin{enumerate}
\item Compute the first $r$ eigenpairs, $\{(\delta_i^2,\-w_i) \}_{i=1}^r$, of $\widehat{H}(\thetab)=\Gprs H\Gprs$, using the
  randomized SVD method and the Chebyshev spectral method.

\item Let $\widehat{\-w}_i= \Gprs \-w_i$ and evaluate \[\widehat{L}(\thetab, \-z) = \-z^\top\,\Gpr\,\-z- \-z^\top\, \sum_{i=1}^r \frac{\delta^2_i}{{ 1 + \delta_i^2 }} (\widehat{\-w}_i^\top\-z)\widehat{\-w}_i+\sum_{i=1}^r \log(1+\delta_i^2).\] 
\end{enumerate}
}

\section{Examples}\label{sec:examples}
\subsection{An image deblurring problem}
We first test our method on an imaging deblurring problem, which involves recovering a latent image from noisy observations of a blurred version of the image~\cite{hansen2006deblurring}. In particular, it is assumed that the blurred image is obtained as a convolution of the latent image with  a point spread function (PSF), 
and as a result the forward model is: 
\begin{equation}
y(t_1,t_2) =  \int \int_{ D} f_\mathrm{PSF}(t_1,t_2) x(\tau_1,\tau_2) d\tau_1d\tau_2,
\end{equation}
where $f_\mathrm{PSF}(t_1,t_2)$ is the PSF and $D$ is the domain of the image.
In this example we take  the image domain to be $D = [-1, 1]^2$ and the PSF to be 
\[f_\mathrm{PSF}(t_1,t_2)=\exp[-({(t_1-\tau_1)^2}+{(t_2-\tau_2)^2})/t],\] 
where $t$ is a parameter controlling the size of the spreading.
Moreover, we assume that the data  $y$ are measured at $m=64^2=4096$ observation locations evenly distributed in $D$,
and that the observation errors are mutually independent and Gaussian with zero mean and variance $(0.1)^2$. 
We represent the unknown $x$ on $256\times256$ mesh grid points, and thus the dimensionality of the inverse problem is $n=65536$. 
The prior on $x$ is a Gaussian distribution with zero mean and covariance kernel~\cite{stein2012interpolation,rasmussen2006gaussian}:
\begin{equation}
K(\-t,\-t') = \sigma^2 \frac{2^{1-\nu}}{\Gamma(\nu)} \left ( \sqrt{2\nu}\frac{d}{\rho} \right )^\nu B_\nu \left ( \sqrt{2\nu}\frac{d}{\rho} \right ), \label{e:matern}
\end{equation}
where $d=\|\-t-\-t'\|_2$, $\Gamma(\cdot)$ is the Gamma function, and $B_\nu(\cdot)$ is the modified Bessel function. 
\eqref{e:matern} is known as the Mat\'ern covariance, and several authors have suggested that such covariances can often provide better models for many realistic physical processes than the popular squared exponential covariance~\cite{stein2012interpolation,rasmussen2006gaussian}. 
A random function with the Mat\'ern covariance is $[\nu-1]$ mean-square (MS) differentiable. 
In this example, we choose $\nu=3$ implying second order MS differentiability. 
We set the standard deviation $\sigma$ to one. The correlation length $\rho$ is treated as a hyperparameter to be inferred in this example.

We now use the proposed empirical Bayesian method to solve the inverse problem. 
We first assume that the true correlation length is $\rho=1$, and randomly generate
a true image from the associated prior, shown in Fig.~\ref{f:data} (left).  
We then test two cases of the forward problem, with $t=0.002$ and $t = 0.02$, where the latter yields an inverse problem that is much more ill-posed than 
the former. 
We assume that the data are observed on a $128\times128$ uniformly distributed mesh.
We then apply the two convolution operators to the generated image and add observational noise to the results, producing the synthetic data also shown in Fig.~\ref{f:data}.

Using the proposed approximate EB method, we evaluate the negative log marginal likelihood function $L$ over a range of $\rho$ values, for both cases of $t$, and plot the results in Fig~\ref{f:L_rho}.
For $t=0.002$ (left figure), we compute $L$ with ranks $r=300$, $400$ and $500$, and $600$, and observe that the curves converge as the rank increases; indeed, the results with $r=500$ and $600$ appear identical. 
For $t=0.02$---because the problem is more ill-posed---we can implement the method with ranks $r=50$, $75$, $100$, and $150$ and observe convergence. In particular, while the results of $r=50$ deviate from the others, the results with $r=75$, $100$, and $150$ look nearly identical, implying that $r=75$ is sufficient for an accurate approximation of the marginal likelihood in this case. 
For both cases,  the optimal value of $\rho$ is found to be
$0.1$, which is actually the true hyperparameter value. 
We then compute the posterior mean of $x$, after fixing $\rho=0.1$ in the Gaussian prior on $x$,
and show the results in Fig.~\ref{f:postmean_conv}. 

\begin{figure}
\centerline{\includegraphics[width=.36\textwidth]{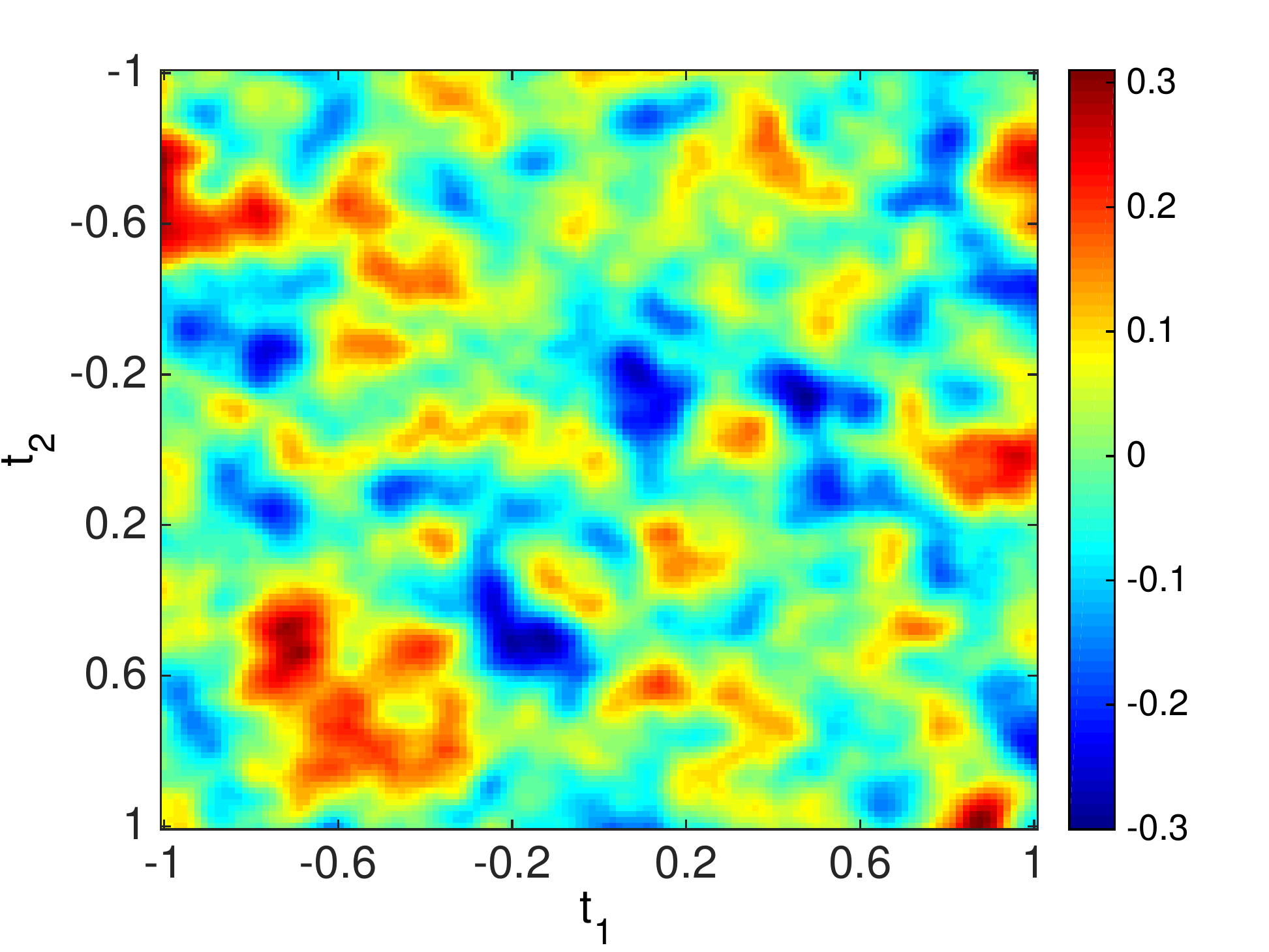}
\includegraphics[width=.36\textwidth]{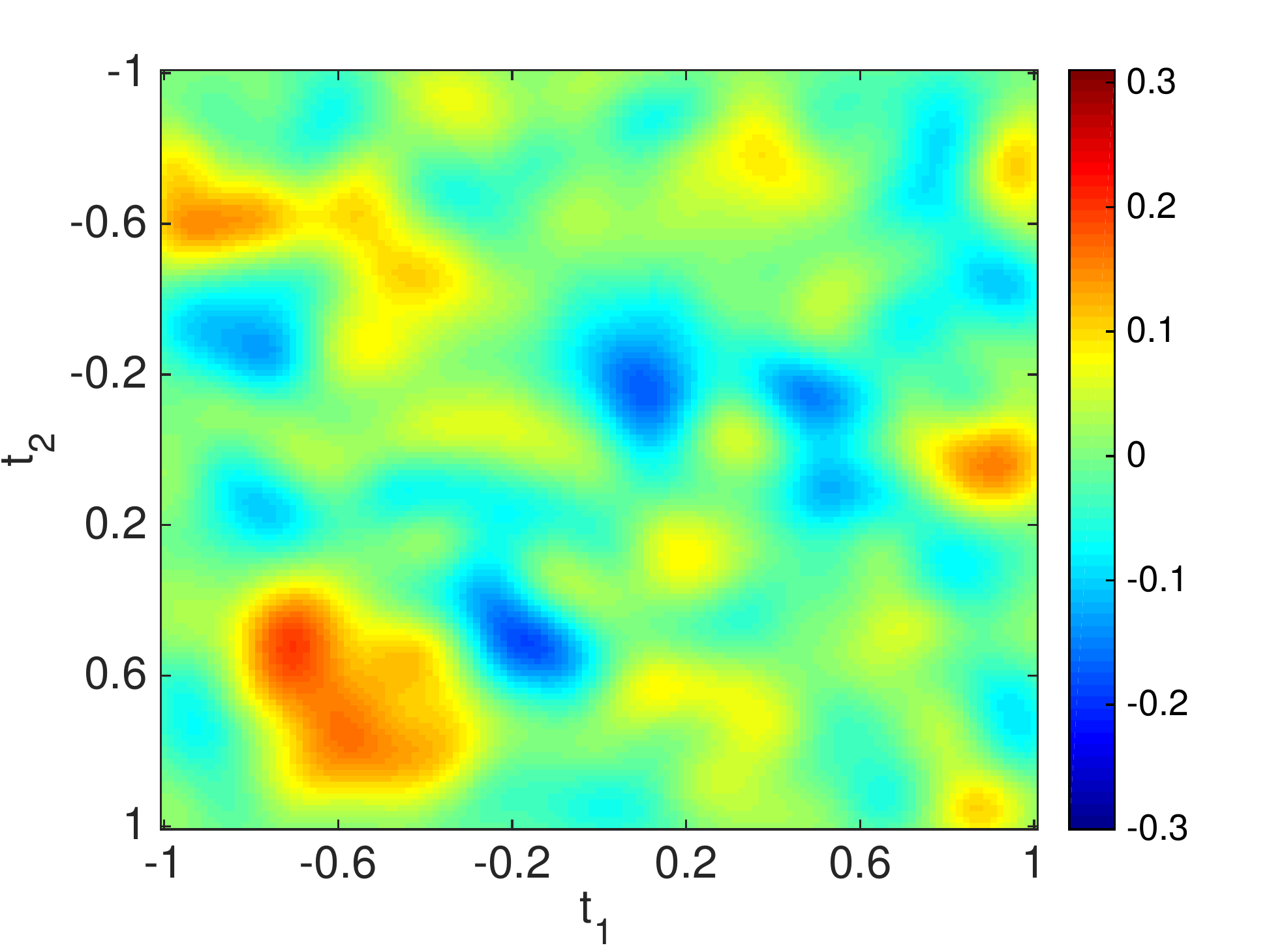}
\includegraphics[width=.36\textwidth]{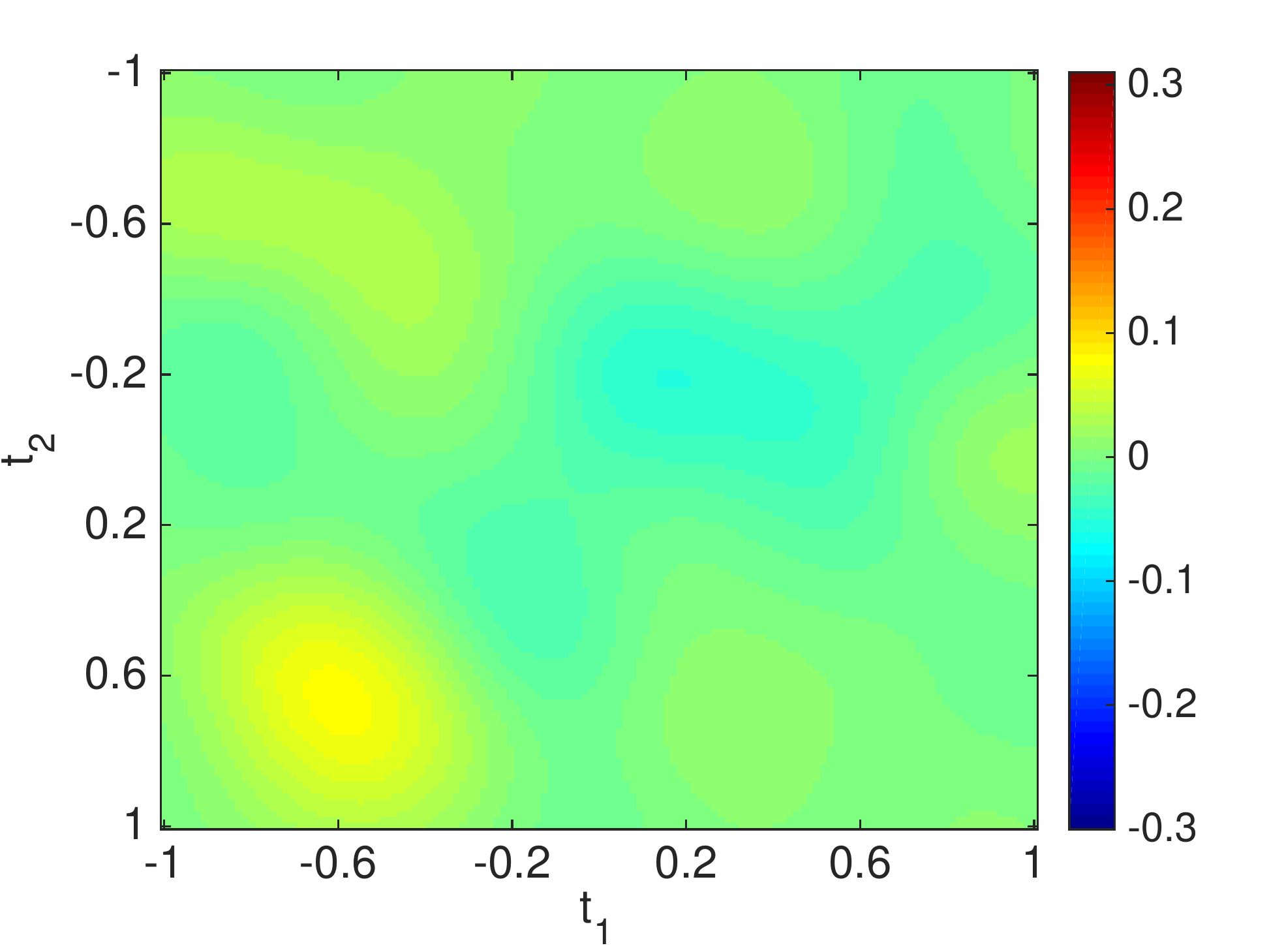}} 
\caption{From left to right: the ground truth, the simulated data at $t=0.002$ and that at $t=0.02$.}\label{f:data}
\end{figure}

\begin{figure}
\centerline{\includegraphics[width=.5\textwidth]{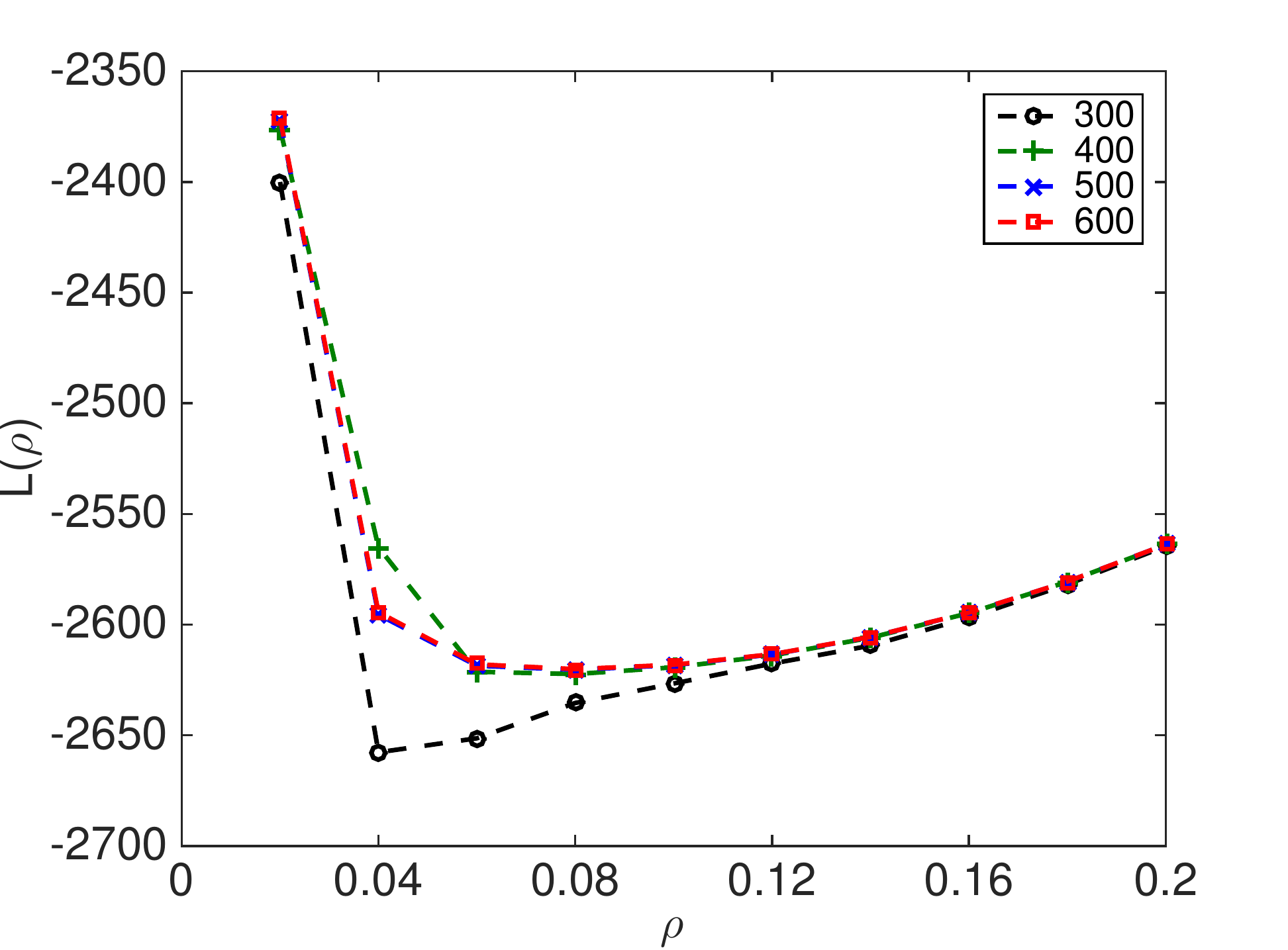}
\includegraphics[width=.5\textwidth]{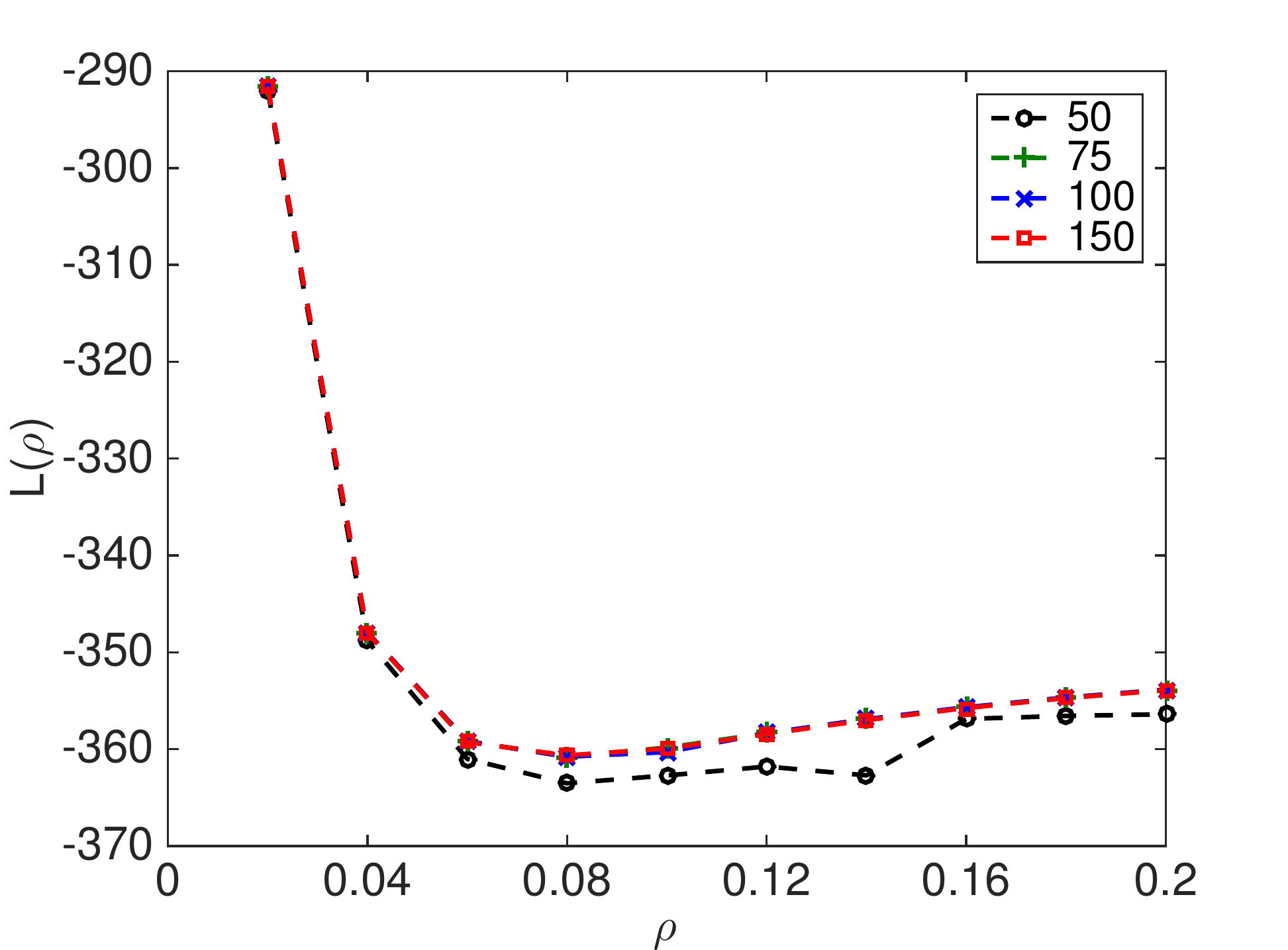}} 
\caption{The negative log marginal posterior function plotted against $\rho$, for a series of ranks $r$ indicated in the legend. Left: $t=0.002$; right: $t=0.02$.}\label{f:L_rho}
\end{figure}

\begin{figure}
\centerline{
\includegraphics[width=.49\textwidth]{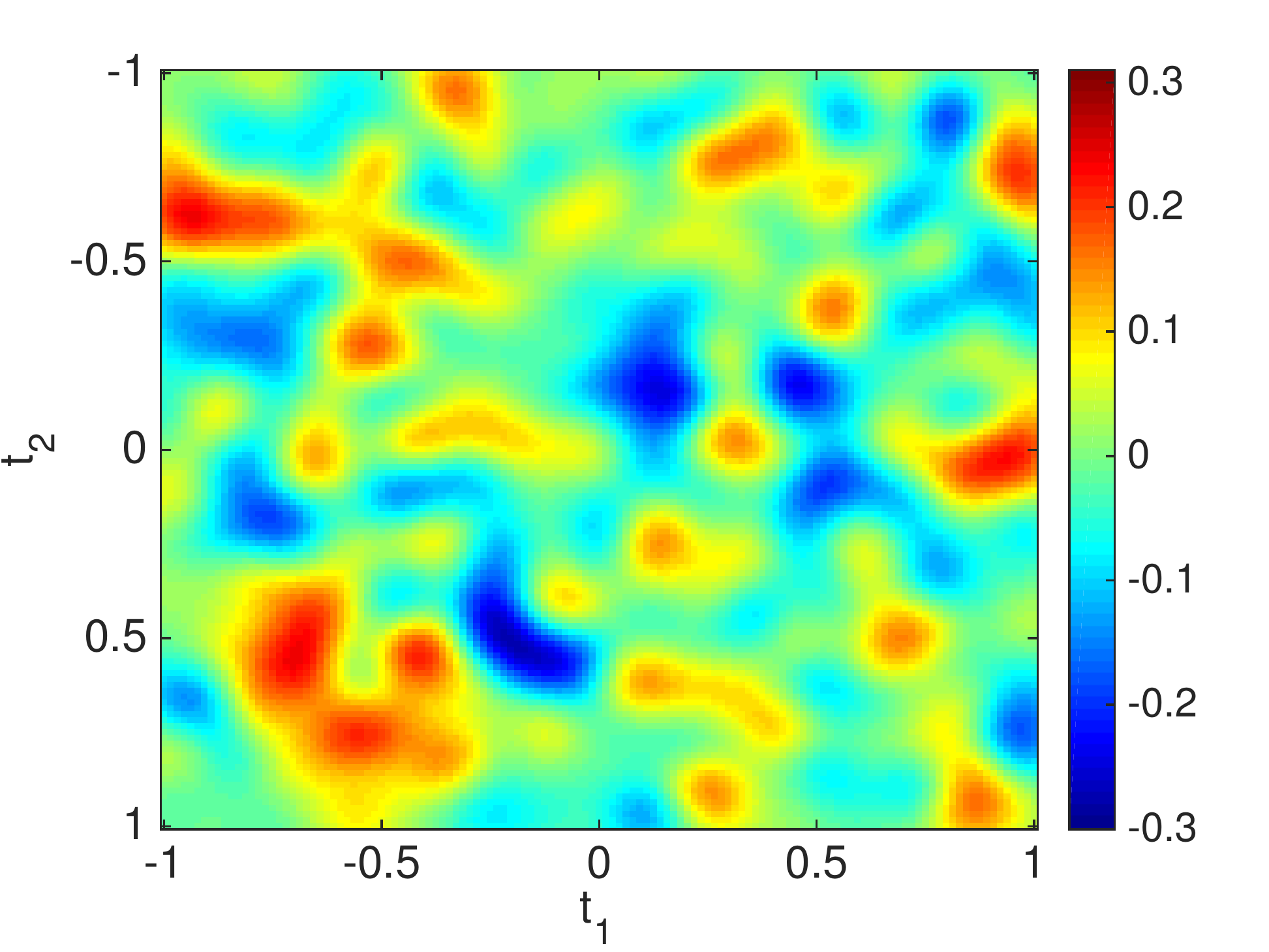}
\includegraphics[width=.49\textwidth]{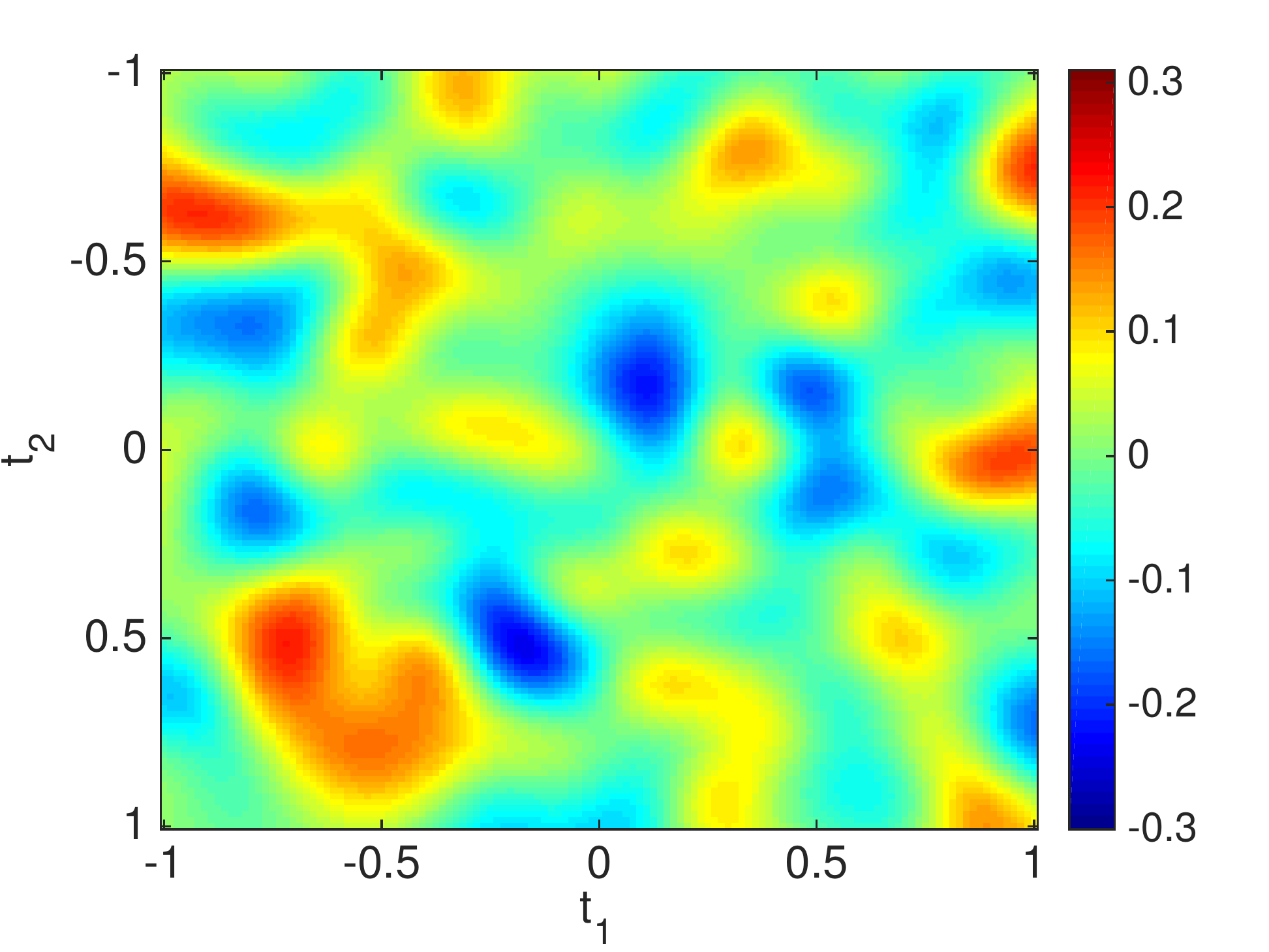}} 
\caption{Posterior mean for the $t=0.002$ case. Left: result for $\rho=0.1$ (optimal); right: result for $\rho=0.2$.}\label{f:postmean_conv}
\end{figure}

\begin{figure}
\centerline{
\includegraphics[width=.49\textwidth]{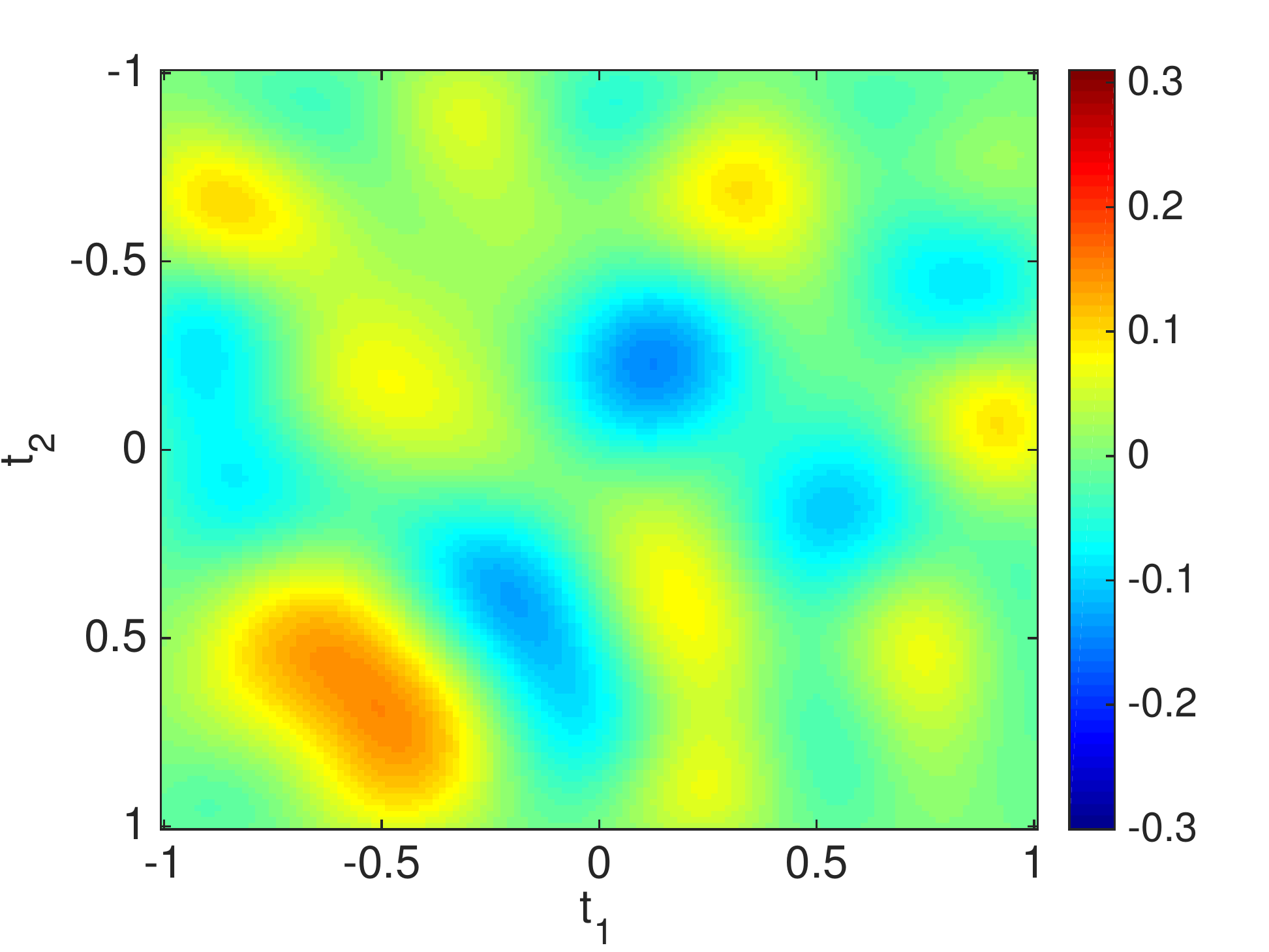}
\includegraphics[width=.49\textwidth]{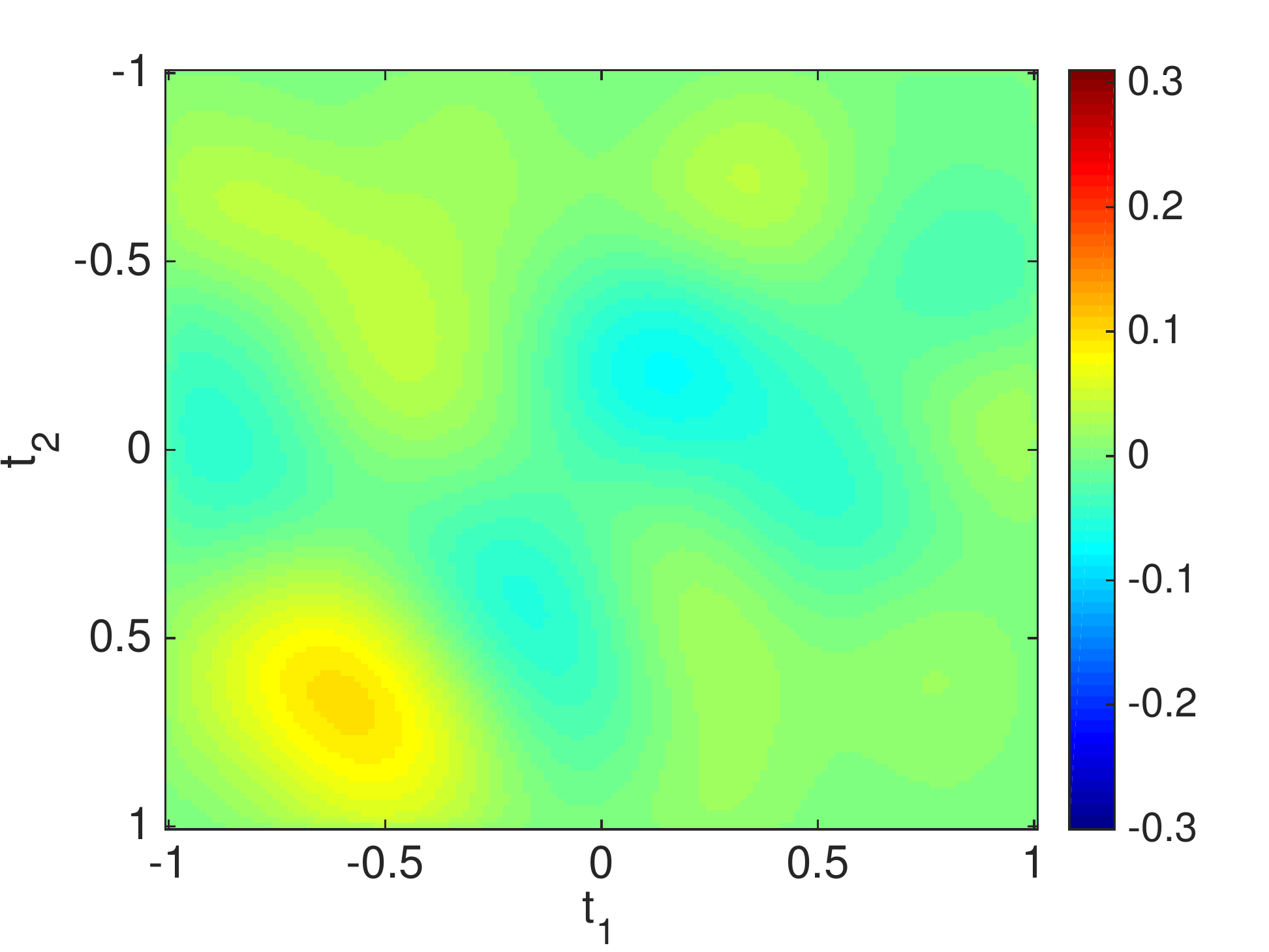}} 
\caption{Posterior mean for the $t=0.02$ case. Left: result for $\rho=0.1$ (optimal); right: result for $\rho=0.02$.}\label{f:postmean2_conv}
\end{figure}

Note that the EB method is able to find the appropriate hyperparameter
values in this example.  Nevertheless, our intention is not to
illustrate that the EB method can always identify the correct value of
the hyperparameters; rather, the main purpose of the example is to
demonstrate that, should one \textit{choose} to use the EB method, the
proposed approximations can efficiently and accurately compute the
marginal likelihood.

\subsection{X-ray computed tomography} 
Our second example is an X-ray computed tomography (CT) problem. 
X-ray CT is a  popular diagnostic tool for medical tomographic imaging of the human body.  It provides images of the object by assigning an X-ray attenuation coefficient to each pixel~\cite{natterer2001mathematics}.  Specifically let $u$ denote the unknown image supported in the unit ball $B(\0,1)$ in $\R^2$.  In the case of two-dimensional parallel beam CT, the projection data~(or sinogram) $f$ for each $\varphi\in[0,2\pi)$ and $s\in\R$ is obtained via the following Radon transform~\cite{Radon1917}:
\begin{equation}\label{Radon_Transform}
f(\varphi,s)=\int_{-\infty}^{\infty}u(s\bth+t\bth^{\perp})dt
\end{equation}
where $\bth=(\cos\varphi,\sin\varphi)$ and $\bth^{\perp}=(-\sin\varphi,\cos\varphi)$.   
{  We test the problem using a $256\times 256$ ground truth image shown in 
Fig.~\ref{f:data_ct} (left), which is taken  from the Harvard Whole Brain Atlas~\cite{brain}.}
The projection data, shown in Fig.~\ref{f:data_ct} (right), is simulated by plugging the true image into the Radon transform and adding measurement noise to the result. 
The noise is taken to be  Gaussian with zero mean and a signal-to-noise ratio of $1\%$.  
The size of the discrete Radon transform operator depends on both the size of the image and the number of projections.  In our simulation, we used $60$ projections equispatially sampled from $0$ to $\pi$. 
{  
To avoid an inverse crime~\cite{kaipio2007statistical}, we recover the image with a $128\times128$ resolution, and  thus the dimensionality of the inverse problem is $128^2$.  
We take the prior distribution to be zero-mean Gaussian, with a covariance matrix given by an anisotropic Mat\'{e}rn kernel:
\begin{equation}
K(\-t,\-t') = \sigma^2 \frac{2^{1-\nu}}{\Gamma(\nu)} \left ( \sqrt{2\nu}d(\-t,\-t') \right )^\nu B_\nu \left ( \sqrt{2\nu} d(\-t,\-t') \right ), \label{e:animatern}
\end{equation}
where  
\[
d(\-t,\-t') =\sqrt{ \frac{(t_1-t'_1)^2}{\rho_1^2}+\frac{(t_2-t'_2)^2}{\rho_2^2}}.
\]
 Now we consider all four parameters $\nu$, $\sigma$, $\rho_1$ and $\rho_2$ as hyperparameters to be determined,
 and we also assume that the variance $\epsilon^2$ of the measurement noise is not available. 
We will identify the five parameters using the proposed low-rank approximate EB method.}

We solve the optimization problems with 9 different ranks ($r=500$, 1000, 1500, 2000, 2500, 3000, 3500, 4000, 4500), and 
plot the optimal values of the five hyperparameters and the associated value of $L$ versus the rank $r$ in Fig.~\ref{f:opt_r}.   
As one can see, the obtained optimal values of the hyperparameters converge as the rank increases, and in particular 
$r=4000$ is sufficient to obtain accurate estimates of  the hyperparameters. 
Next we shall illustrate that the obtained hyperparameter values can lead to rather good inference results. 
To show this, we compute the posterior mean and the variance of the image using the hyperparameter values computed with the various ranks,
and we then compute the peak noise-to-signal ratio (PSNR) of the posterior means against the true
image, which is a commonly used measure of quality for image reconstruction.
{ Note that the low rank approximation is only used to obtain the 
hyperparameters; given these hyperparameter values, the posterior means and variances are computed directly without approximation.}
We plot the PSNR as a function of the rank $r$ in Fig.~\ref{f:psnr}. 
We also show the posterior mean and variance field associated with each rank in the plot.  One can see from the figure that, as the rank increases, the resulting hyperparameters can recover images with higher quality in terms of PSNR.  
Finally, in Fig.~\ref{f:recovery}, we plot the posterior mean and variance using the hyperparameters computed with $r=4500$, 
which are taken to be the inference results of this problem.

These results demonstrate that the outcome of Bayesian inference can depend critically on the
hyperparameters, and that, at least in this example, the proposed method can
provide reasonable values for these parameters, provided that a sufficiently large rank is used.
We also note here that,  to better recover images with sharp edges,
one might resort to {more complex Gaussian
  hypermodels}~\cite{bardsley2010hierarchical,calvetti2007gaussian,calvetti2008hypermodels}
than the one considered here, or even non-Gaussian models such as the
Besov~\cite{dashti2012besov,lassas2009discretization} and TV-Gaussian~\cite{yao2016tv} priors.
Nevertheless, if one has chosen to use a Gaussian hypermodel, our method can efficiently
determine the values of the associated hyperparameters. 

\begin{figure}
\centerline{\includegraphics[width=.5\textwidth]{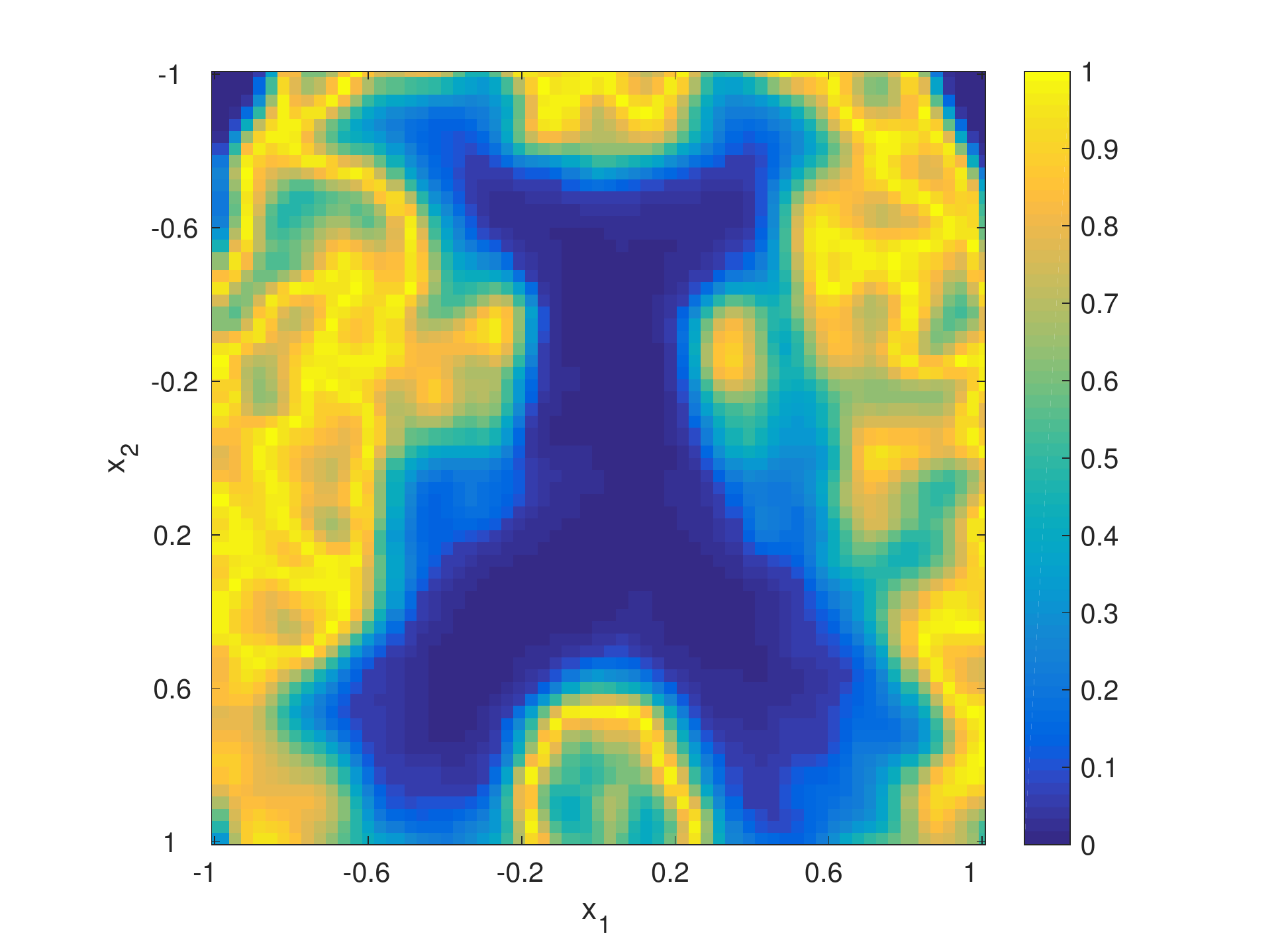}
\includegraphics[width=.5\textwidth]{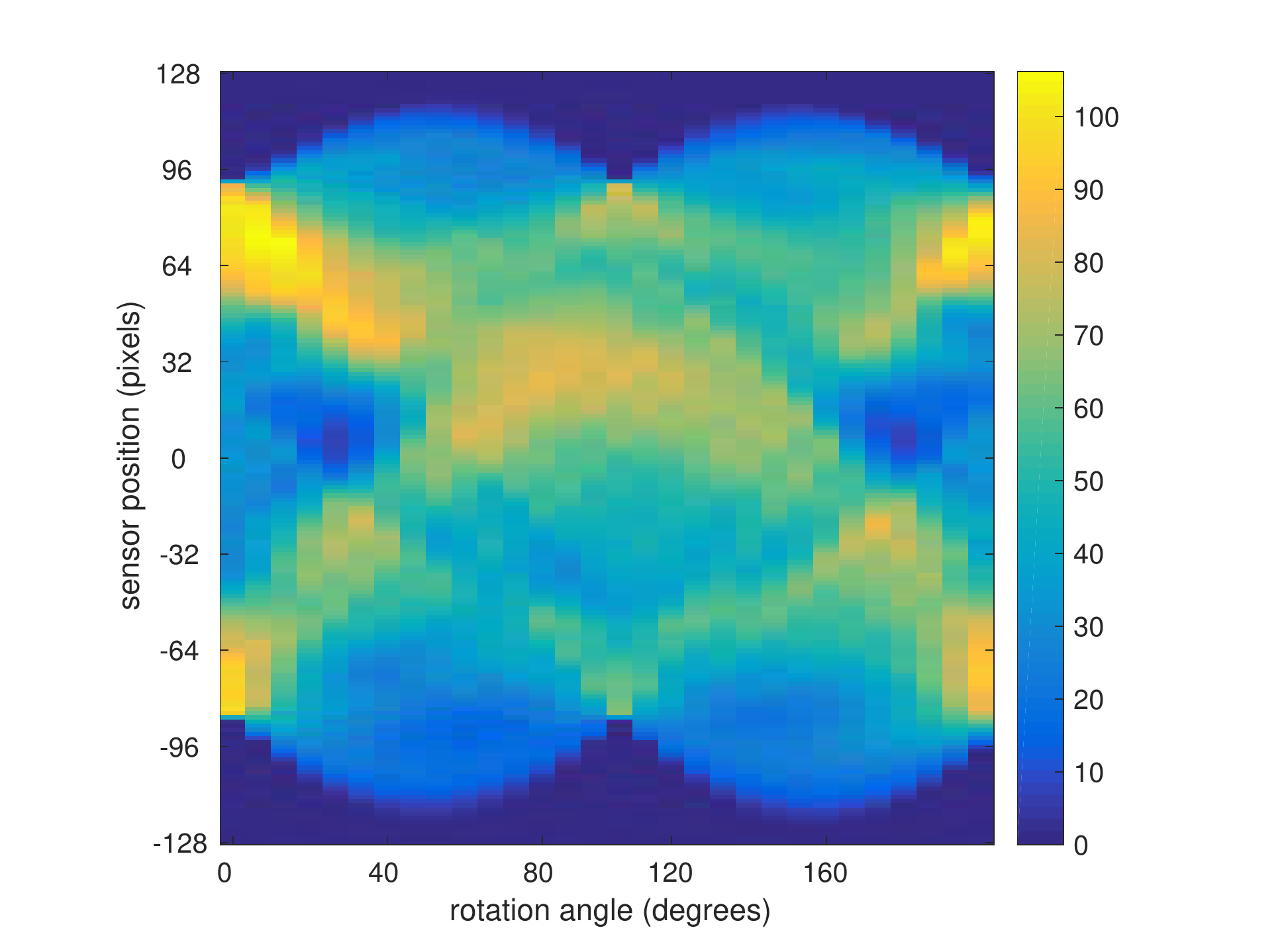}} 
\caption{Left: the ground truth image. Right:  the projection data simulated from the ground truth.}\label{f:data_ct}
\end{figure}

\begin{figure}
\leftline{\includegraphics[width=.33\textwidth]{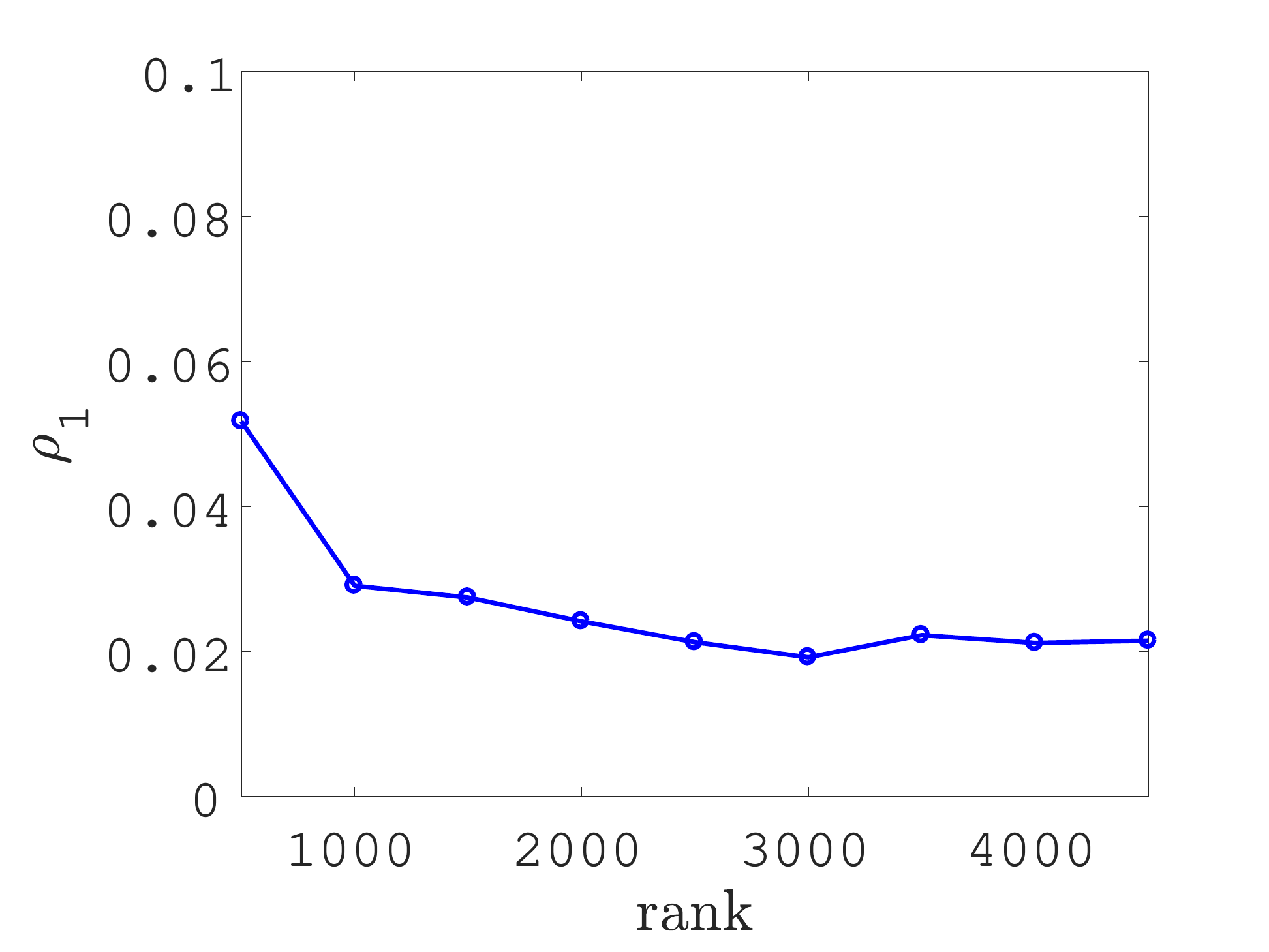}\includegraphics[width=.33\textwidth]{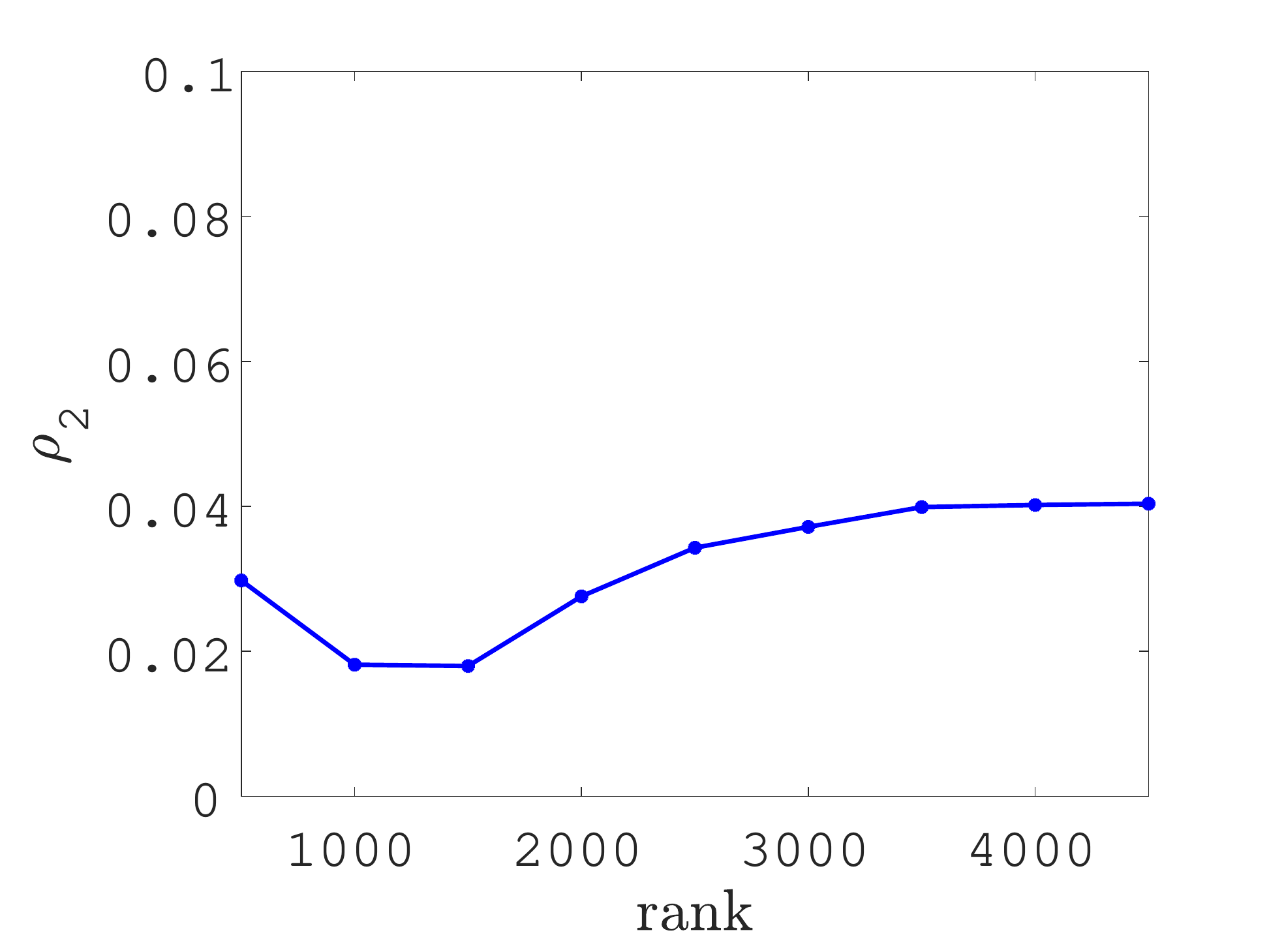}
\includegraphics[width=.33\textwidth]{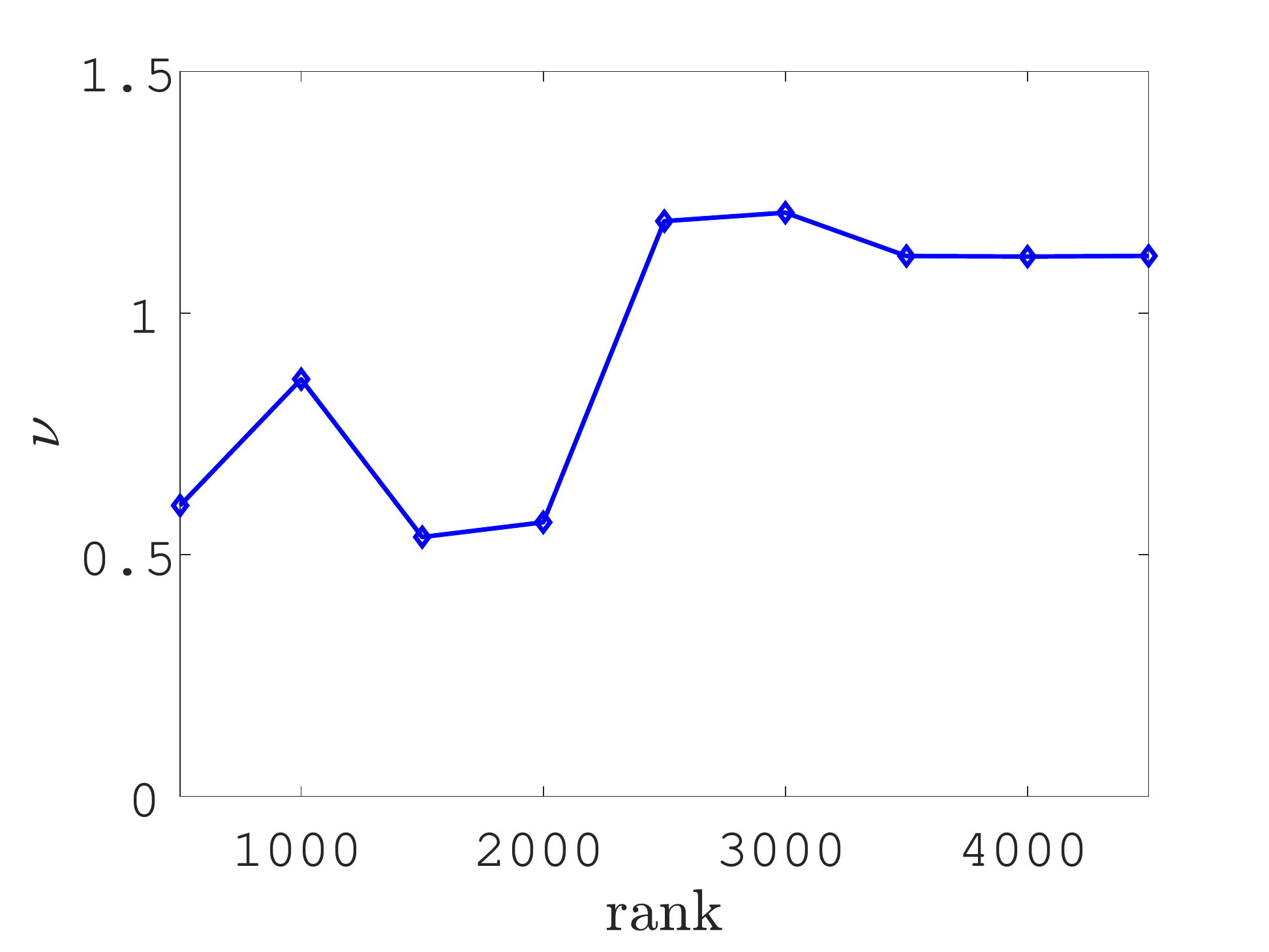}} 
\leftline{\includegraphics[width=.33\textwidth]{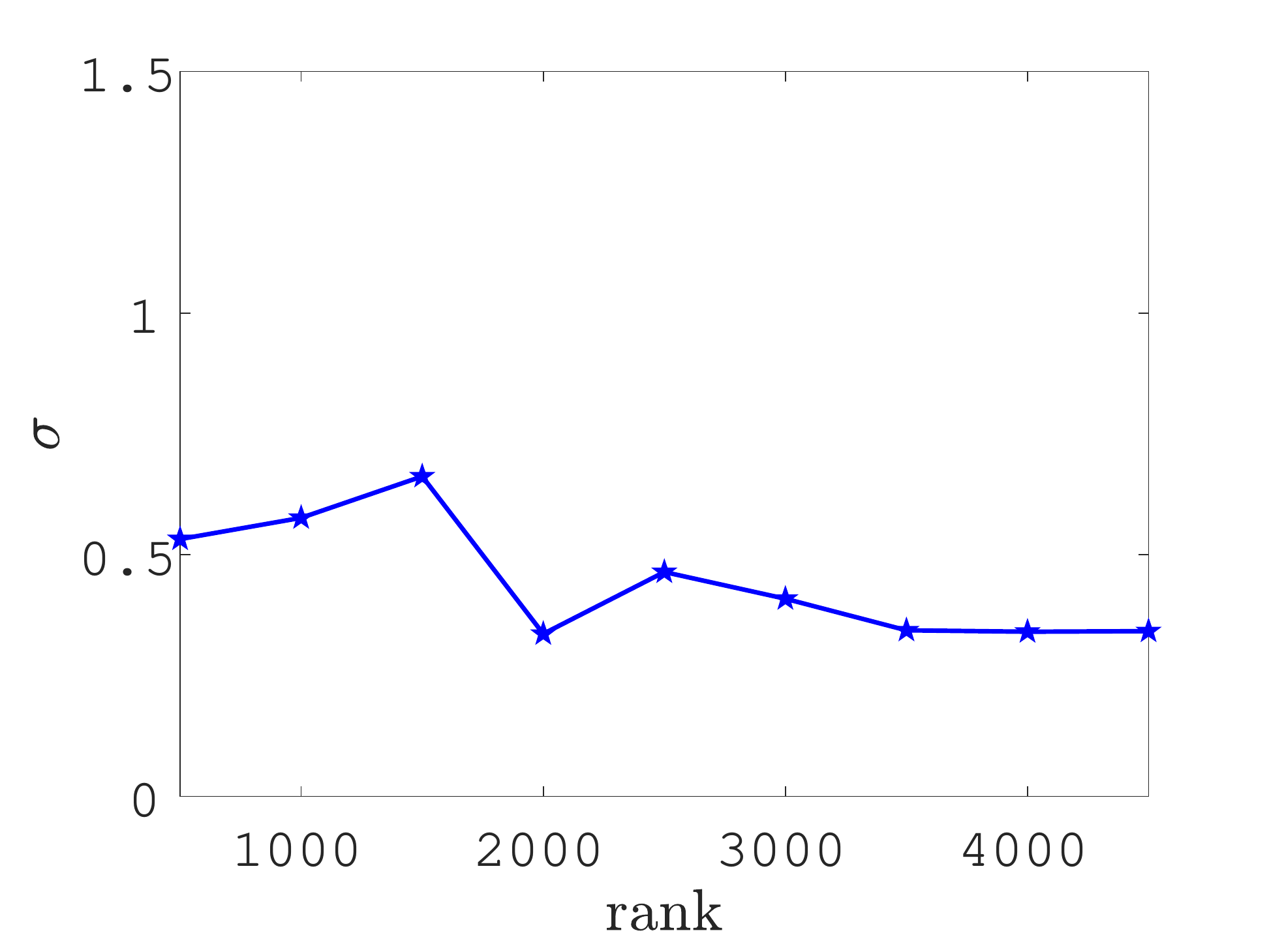}
\includegraphics[width=.33\textwidth]{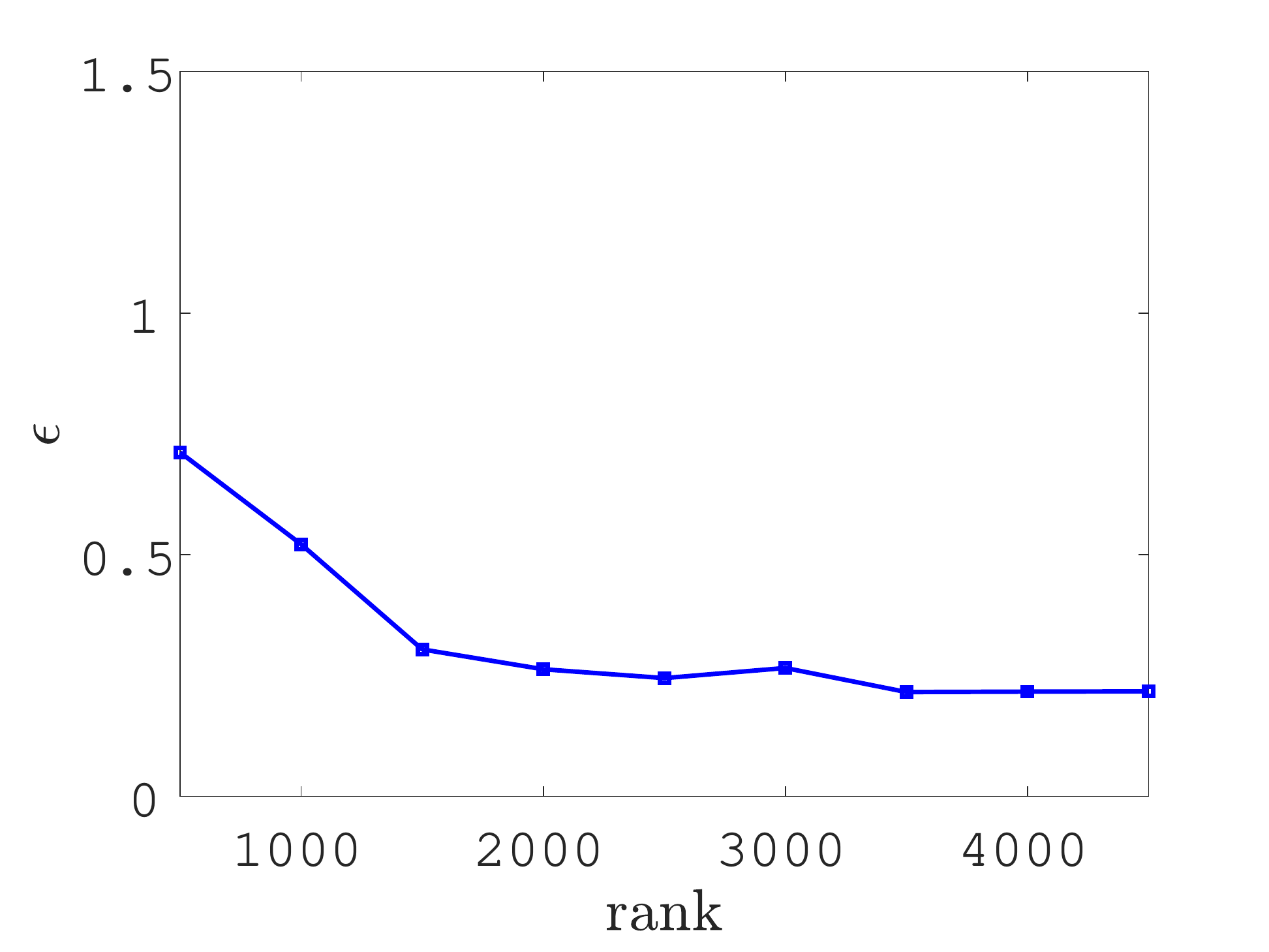}
\includegraphics[width=.33\textwidth]{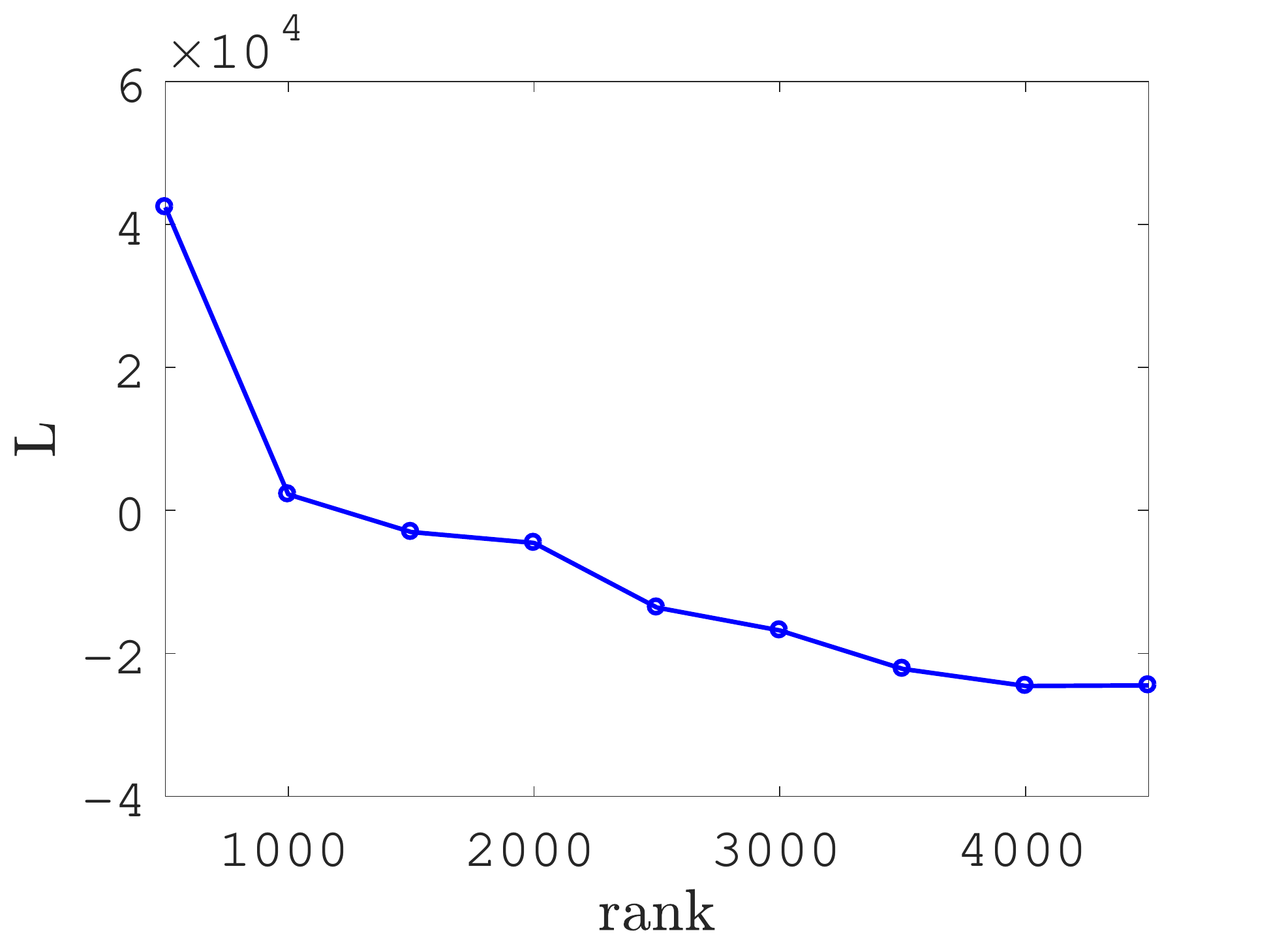}} 
\caption{The optimal value of the five hyperparameters and the associated value of $L$ (bottom, right) plotted against the rank $r$ used in
  the EB approximation. }\label{f:opt_r}
\end{figure}

\begin{figure}
\centerline{\includegraphics[width=.95\textwidth]{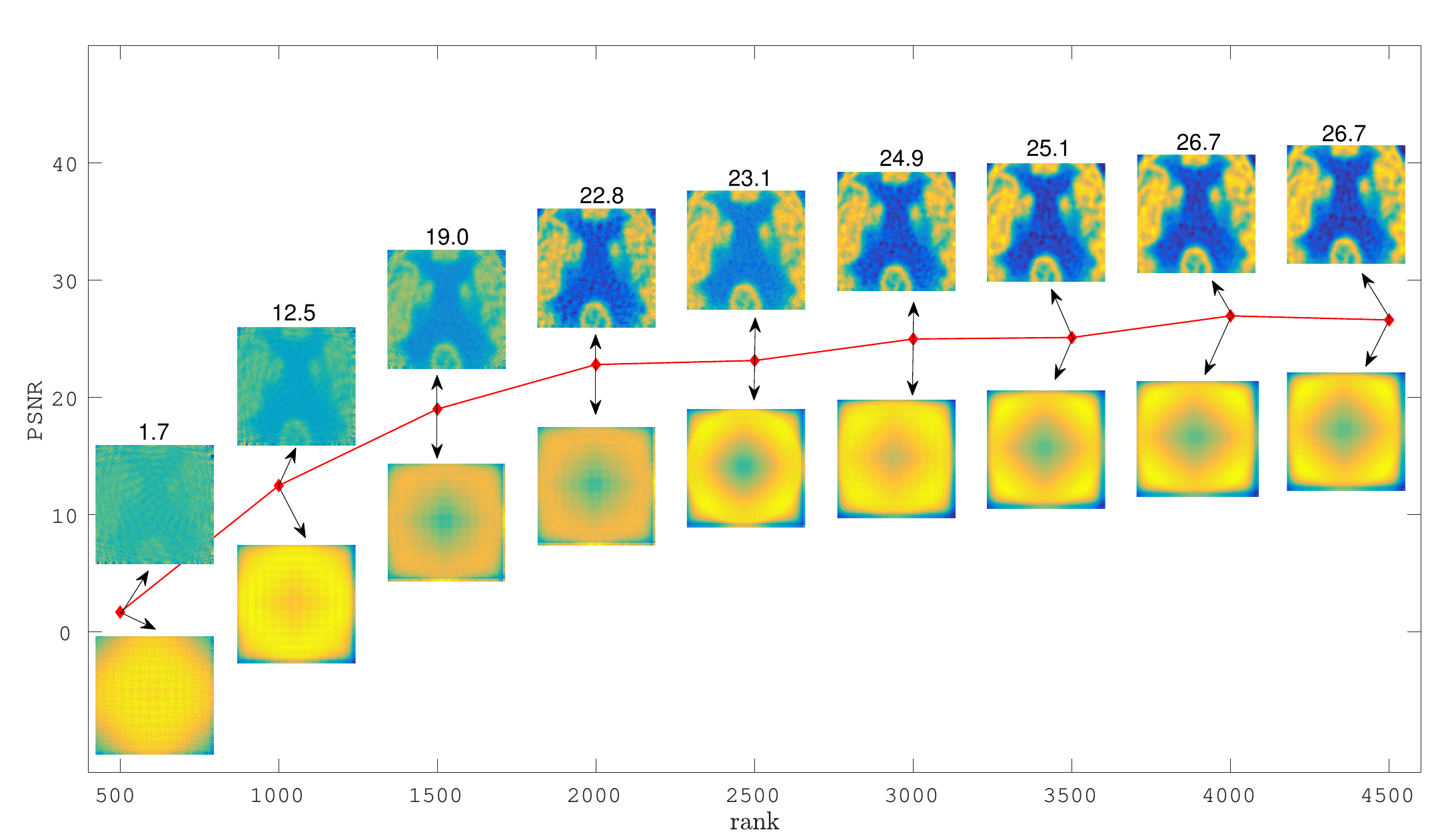}}
\caption{The PSNR of the posterior mean versus the rank $r$ (red line). Above and below the line, we also show the posterior mean 
and variance obtained with the hyperparameters computed with each rank.}\label{f:psnr}
\end{figure}

\begin{figure}
\centerline{\includegraphics[width=.49\textwidth]{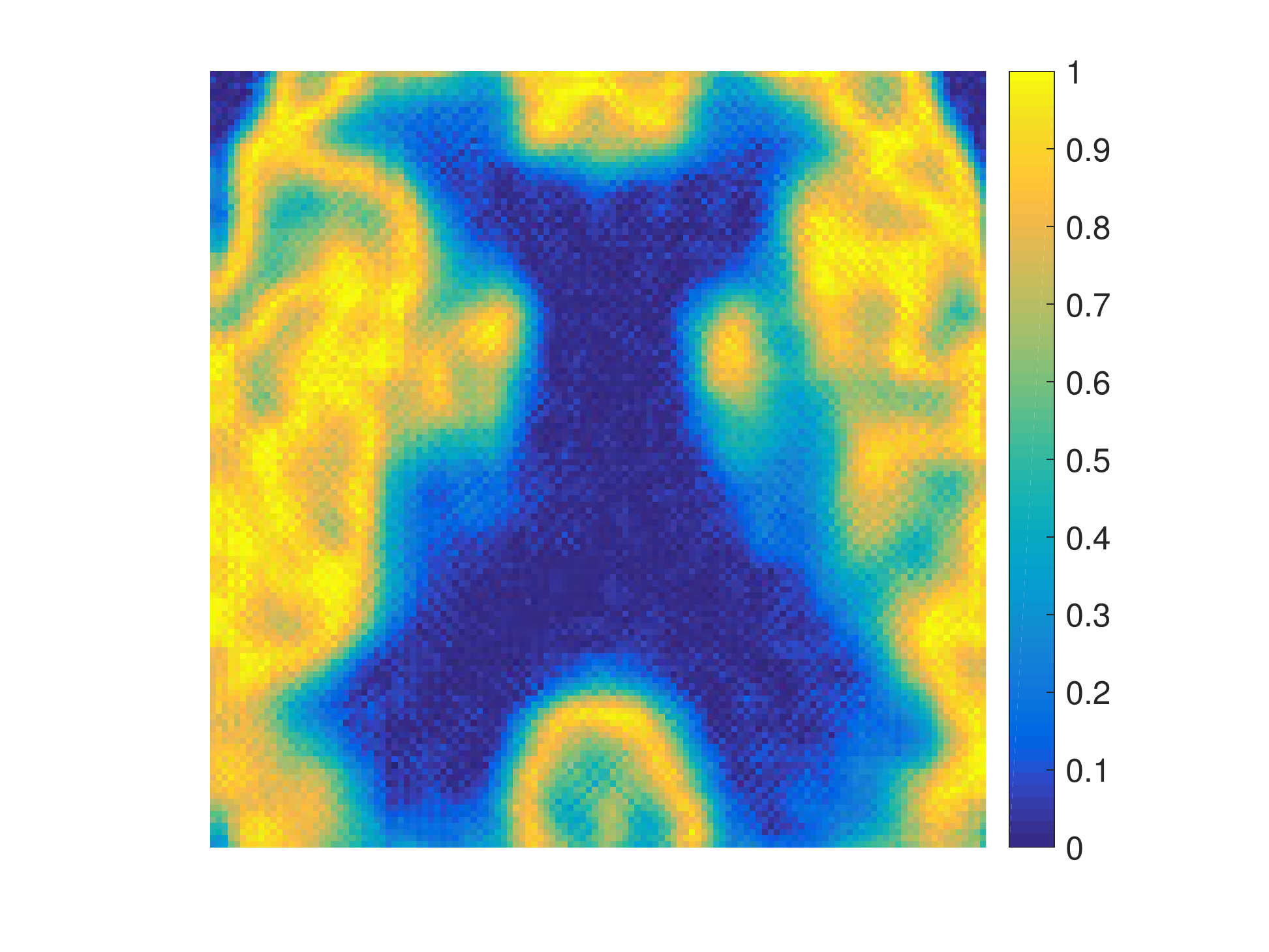}
\includegraphics[width=.49\textwidth]{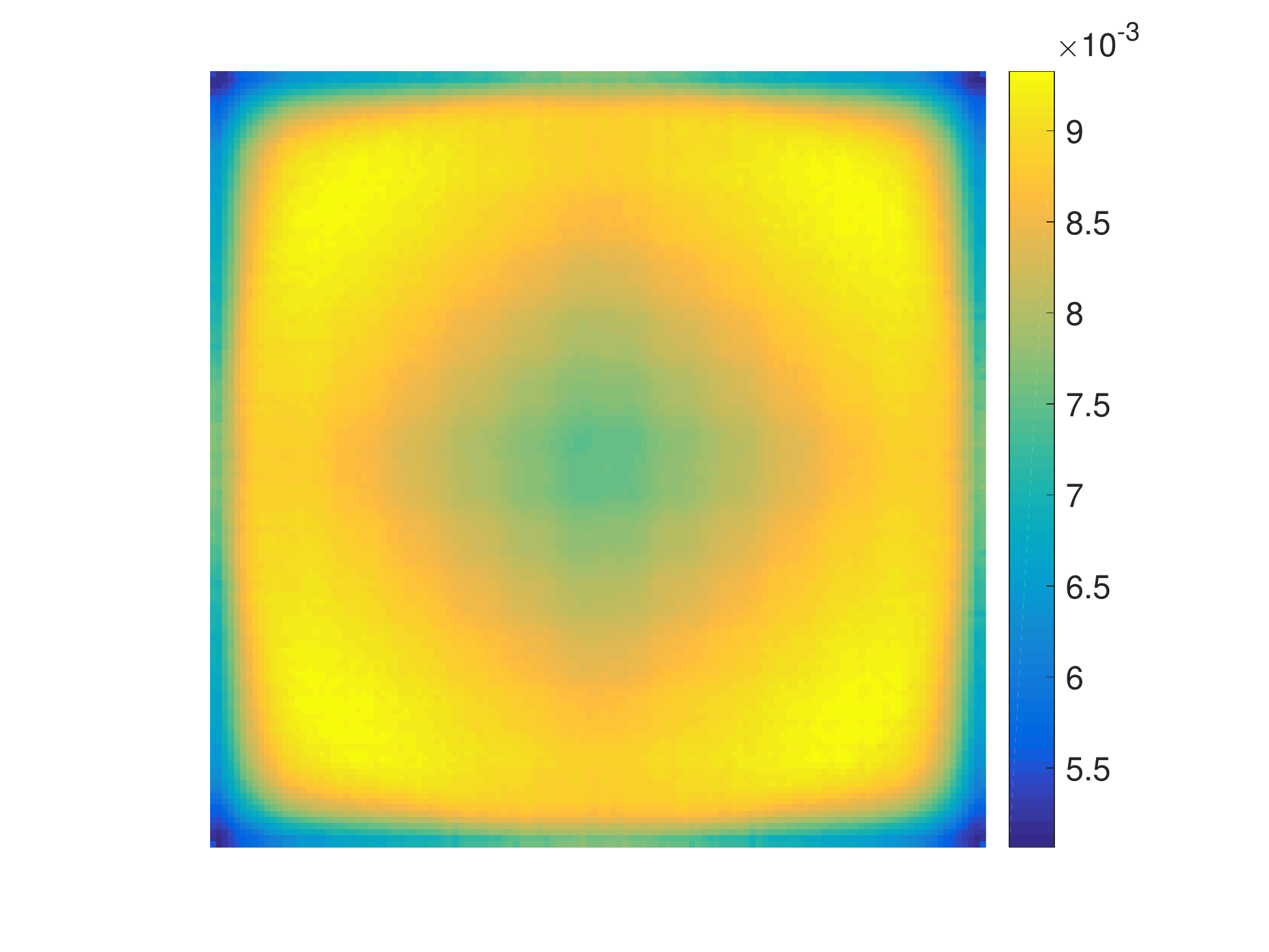}}  
\caption{The posterior mean and  the posterior variance  computed using the optimized hyperparameter values with $r=4500$.}\label{f:recovery}
\end{figure}

\section{Conclusions}\label{sec:conclusions}

This paper investigates empirical Bayesian methods for determining hyperparameters in linear inverse problems.
%
We develop an efficient numerical method to approximately evaluate the marginal likelihood
function of the hyperparameters for large-scale problems, based on a low-rank
approximation of the update from the prior to the posterior covariance. 
The proposed method can achieve a computational complexity of $O(n^2 r)$ for $r\ll n$, while 
a standard full-rank or direct evaluation approach requires computations of $O(n^3)$ complexity. 
We also show that
this approximation of the marginal likelihood is optimal in a minimax sense.
Two numerical examples illustrate that the proposed algorithm can accurately evaluate the marginal likelihood function
required  in the EB method. 
This approach may be useful in a wide range of applications where the
unknown is of very high dimension, such as medical and geophysical image
reconstruction.
{  Finally, it is worth noting that, while the approximate EB method presented in this work can only be applied to linear inverse problems, there have been several efforts successfully  extending low-rank posterior approximations to nonlinear problems~\cite{martin2012stochastic,cui2014likelihood}; along these lines, we expect that our method could also be extended to EB computations for nonlinear inverse problems.}

\section*{Acknowledgement}
Li was partially supported by the NSFC under grant number 11771289. Marzouk was
partially supported by the US Department of Energy, Office of Advanced Scientific
Computing Research, under grant number DE-SC0009297.

\medskip

\bibliographystyle{plain}
\bibliography{lowrank}

\end{document}